\theoremstyle{plain}
\newtheorem{thm}{Theorem}[section]
\newtheorem{prop}[thm]{Proposition}
\newtheorem{cor}[thm]{Corollary}
\newtheorem{lem}[thm]{Lemma}
\theoremstyle{definition}
\newtheorem{defn}[thm]{Definition}
\newtheorem{ex}[thm]{Example}
\newtheorem{rem}[thm]{Remark}
\newcommand{\rleft}{\mathopen{}\mathclose\bgroup\left}
\newcommand{\rright}{\aftergroup\egroup\right}
\newcommand{\set}[1]{\rleft\{ {#1} \rright\}}
\newcommand{\with}{\,\colon\,}
\DeclareMathOperator{\Ann}{Ann}
\DeclareMathOperator{\cone}{cone}
\DeclareMathOperator{\conv}{conv}
\DeclareMathOperator{\Grass}{Grass}
\DeclareMathOperator{\Proj}{Proj}
\DeclareMathOperator{\Quot}{Quot}
\DeclareMathOperator{\Spec}{Spec}
\DeclareMathOperator{\Supp}{Supp}
\newcommand{\Om}{{\mathcal{O}}}
\newcommand{\kk}{\Bbbk} 
\newcommand{\CC}{\mathbb{C}}
\newcommand{\NN}{\mathbb{N}}
\newcommand{\PP}{\mathbb{P}}
\newcommand{\RR}{\mathbb{R}}
\newcommand{\ZZ}{\mathbb{Z}}
\newcommand{\PPwt}{\PP_{\mathrm{wt}}}
\newcommand{\PPwts}{\PPwt^*}
\newcommand{\PPs}{\PP^*}
\newcommand{\pp}{\mathfrak{p}}
\newcommand{\qq}{\mathfrak{q}}
\newcommand{\Rone}{R^{(1)}} 
\newcommand{\<}{\langle}
\renewcommand{\>}{\rangle}
\newcommand{\too}{\longrightarrow}
\newcommand{\hs}{h^*}
\newcommand{\Defn}[1]{\textbf{#1}}
\title{Spanning Lattice Polytopes and the Uniform Position Principle}
\author{Johannes Hofscheier}
\address{Department of Mathematics and Statistics, McMaster University, 1280 Main Street West, Hamilton, Ontario L8S4K1, Canada}
\email{johannes.hofscheier@math.mcmaster.ca}
\thanks{}
\author{Lukas Katth\"an}
\address{Institute of Mathematics, Goethe-Universit\"at Frankfurt,
	Robert-Mayer-Str. 10, 60325 Frankfurt am Main}
\curraddr{}
\email{katthaen@math.uni-frankfurt.de}
\thanks{}
\author{Benjamin Nill}
\address{Faculty of Mathematics, Otto-von-Guericke Universit\"at Magdeburg,
  Postschlie{\ss}fach 4120, 39106 Magdeburg, Germany.}
\curraddr{}
\email{benjamin.nill@ovgu.de}
\thanks{}
\subjclass[2010]{Primary: 52B20; Secondary: 13A02, 05E40, 14H45}
\keywords{Lattice polytope, Ehrhart polynomial, $h^\ast$-vector, Uniform
  Position Principle, Bertini Theorem, weighted projective space}
\begin{document}
\selectlanguage{english}

\begin{abstract}
 A lattice polytope $P$ is called {\em IDP} if any lattice point in its $k$th dilate is a sum of $k$ lattice points in $P$. In 1991 Stanley proved a strong inequality in Ehrhart theory for IDP lattice polytopes. We show that his conclusion holds under much milder assumptions, namely if the lattice polytope $P$ is {\em spanning}, i.e., any lattice point of the ambient lattice is an integer affine combination of lattice points in $P$. As an application, we get a generalization of Hibi's Lower Bound Theorem. Our proof relies on generalizing Bertini's theorem to the semistandard situation and Harris' Uniform Position Principle to certain curves in weighted projective space.
\end{abstract}

\maketitle{}

\section{Introduction}
\label{sec:intro}

\subsection{Ehrhart theory and main result}

The \Defn{Ehrhart polynomial} of a lattice polytope $P \subseteq \RR^{d}$ counts lattice points contained in dilations of the polytope, i.e., $\mathrm{ehr}_{P}(k) = \#\rleft( k P \cap \ZZ^{d} \rright)$ (see \cite{Ehr62}). Stanley showed (see \cite{Sta80}) that its generating function
\begin{equation}
  \label{hs-main}
  \sum_{k\ge0} \mathrm{ehr}_{P}(k) t^{k} = \frac{h^{*}_{P}(t)}{(1-t)^{{d+1}}}
\end{equation}
is a rational function with numerator polynomial, the so-called \Defn{$\hs$-polynomial}, whose coefficients are all nonnegative integers. Its coefficient vector is called the \Defn{$\hs$-vector} of $P$, and we denote it by $h^{*}(P)$. The maximal integer $i$ with $\hs_i(P) \not=0$ is called the \Defn{degree} of $P$. One has $\deg(P) \le \dim(P)$, with equality if and only if $P$ has a lattice point in its relative interior. The study of $\hs$-vectors of lattice polytopes is a central part of Ehrhart theory (we refer to the book \cite{Beckbook}, as well as the survey articles \cite{Beckpaper} and \cite{Braunpaper}).

In 1991 Stanley proved the following set of inequalities (see \cite[Proposition 4.1]{Stanley:CMDom}), nowadays often referred to as {\em Stanley's inequalities}.

\begin{thm}[Stanley's inequalities]
  \label{thm:known}
  Let $P$ be a lattice polytope of dimension $d$ and degree $s$. Then for $0 \leq i \leq \lfloor s/2 \rfloor$,
    \[
     \hs_0 + \hs_1 + \dotsb + \hs_i \leq \hs_{s-i} + \hs_{s - (i-1)} + \dotsb + \hs_s.
    \]
\end{thm}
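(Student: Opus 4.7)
My plan is to read the inequality as a statement about an Artinian graded $\kk$-algebra whose Hilbert function is the $\hs$-vector, and then to produce an injective $\kk$-linear map $\bigoplus_{j=0}^{i} A_j \hookrightarrow \bigoplus_{k=s-i}^{s} A_k$. The starting point is the Ehrhart semigroup ring $R := \kk[\cone(P\times\{1\})\cap\ZZ^{d+1}]$ over an infinite base field $\kk$; by Hochster's theorem $R$ is a Cohen-Macaulay normal integral domain of Krull dimension $d+1$, and its Hilbert series equals $\hs_P(t)/(1-t)^{d+1}$.

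First I would choose a homogeneous system of parameters $\theta_1, \ldots, \theta_{d+1}$ for $R$ and form the Artinian quotient $A := R/(\theta_1, \ldots, \theta_{d+1})$. Since $R$ is Cohen-Macaulay, the $\theta_i$ are a regular sequence and the graded dimensions of $A$ are controlled by $\hs_P$ and the $\deg\theta_i$. In the IDP case one can take the $\theta_i$ of degree one, so that $\dim_\kk A_j = \hs_j$ directly. Outside the IDP case this step is already delicate: because $R$ is not standard graded, the reduction must be carried out with hypersurface sections of higher degree whose effect on the Hilbert series is precisely controlled; this is a place where the paper's Bertini-type theorem in weighted projective space $\PPwt$ is designed to help.

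Second, with $A$ in hand, I would construct the desired injection via multiplication by a carefully chosen family of elements $\omega_j \in A_{s-2j}$, sending $a \in A_j$ to $\omega_j \cdot a \in A_{s-j}$ for each $0 \le j \le i$; the target $\bigoplus_{j=0}^i A_{s-j}$ is exactly $\bigoplus_{k=s-i}^s A_k$. For the resulting direct sum of multiplication maps to be injective we need, for each $j$, that $\omega_j$ does not annihilate any non-zero element of $A_j$. The natural way to achieve this is to lift the data back to $R$ and exploit the domain property: a sufficiently generic element of $R_{s-2j}$ is a non-zero-divisor, and one wants this non-vanishing to survive passage to $A$ on the graded piece $A_j$.

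The main obstacle is precisely this non-degeneracy step when $P$ is not IDP. In the standard-graded setting of Stanley's 1991 argument, a generic linear form provides the required injectivity on low-degree graded pieces because $R$ is a standard-graded Cohen-Macaulay domain; when $R$ is merely semistandard, there need not be enough low-degree elements to play that role, and a naïve "general linear form" strategy fails. The replacement, which the remainder of the paper is built to supply, is to pass to a generic complete-intersection curve in $\PPwt$, apply a Bertini theorem to guarantee that this curve is irreducible and reduced, and then invoke Harris' Uniform Position Principle on the resulting curve to force the genericity needed to realize both the Artinian reduction of the first step and the non-degenerate multiplication pairings of the second.
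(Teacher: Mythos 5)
Your plan has a fatal gap in the second step. You want to choose, for each $0\le j\le i$, an element $\omega_j\in A_{s-2j}$ such that multiplication $\cdot\omega_j\colon A_j\to A_{s-j}$ is injective. If such $\omega_j$ existed, then in particular $\dim_\kk A_j\le\dim_\kk A_{s-j}$, i.e.\ $\hs_j\le \hs_{s-j}$ for every $j$. But this termwise inequality is false for general lattice polytopes: the paper's own example (the join of $[0,3]$ with the Reeve simplex for $r=2$) has $\hs$-vector $(1,2,1,2)$ and degree $s=3$, so $\hs_1=2>1=\hs_{2}=\hs_{s-1}$, and no such $\omega_1\in A_1$ can exist. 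The cumulative inequality $\sum_{j\le i}\hs_j\le\sum_{k\ge s-i}\hs_k$ is genuinely weaker than a direct sum of injections $A_j\hookrightarrow A_{s-j}$, and any proof must allow the summands to ``mix'' across degrees. So the decomposition you propose cannot work, independently of how cleverly the $\omega_j$ are chosen.

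Beyond that, your proposal conflates the roles of the two theorems quoted from Stanley. Theorem~\ref{thm:known} holds for \emph{all} lattice polytopes; the paper does not prove it but cites \cite[Proposition 4.1]{Stanley:CMDom}. Stanley's argument there is a canonical-module/local-duality argument for graded Cohen--Macaulay domains: $\omega_R$ has rank one, hence embeds (after a shift) as a homogeneous ideal of $R$, and combining this embedding with duality on the Artinian reduction yields precisely the cumulative inequalities --- no Lefschetz-type multiplication maps and no uniform position. The Bertini theorem and the Uniform Position Principle that occupy Sections~\ref{sec:spanning} and~\ref{sec:UPP} are developed to prove the different Theorem~\ref{thm:main} (the analogue of \cite[Proposition 3.4]{Stanley:CMDom} for spanning polytopes), not Theorem~\ref{thm:known}. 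Finally, the worry about the Artinian reduction outside the IDP case is unfounded: the Ehrhart ring is always semistandard and Cohen--Macaulay, so by \Cref{lem:gen-lin} a generic sequence of \emph{linear} forms is a homogeneous system of parameters and hence a regular sequence, giving $\dim_\kk A_j=\hs_j$ with no higher-degree hypersurface sections needed.
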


Recall that a lattice polytope $P \subseteq \RR^{d}$ has the \Defn{integer decomposition property} (IDP for short) if for every lattice point $x$ in $k P$ there are lattice points $x_{1}, \ldots, x_{k}$ in $P$ such that $x = x_{1} + \ldots + x_{k}$. These polytopes are sometimes also called \Defn{integrally closed} and form a natural and much studied class of lattice polytopes (we refer to \cite{Braunpaper,CHHH}). In the same paper, Stanley gave the following stronger version of the previous theorem for IDP polytopes (see \cite[Proposition 3.4]{Stanley:CMDom}).

\begin{thm}[Stanley's stronger inequalities for IDP]
  \label{thm:idp}
  Let $P$ be an IDP lattice polytope of degree $s$. Then, for every $i,j \in \NN$ with $i + j < s$, 
  \begin{equation*}
      \hs_1 + \hs_2 + \dotsb + \hs_i \leq \hs_{j+1} + \hs_{j+2} + \dotsb + \hs_{j+i}.
  \end{equation*}
\end{thm}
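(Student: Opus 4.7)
The strategy is to reinterpret Stanley's inequality as a Hilbert-function statement about the coordinate ring of a generic hyperplane section of the projective variety $\Proj R$, where $R$ is the Ehrhart algebra of $P$, and to establish that statement via Bertini's theorem and the Uniform Position Principle.

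After extending the base field to be algebraically closed of characteristic zero, let $R := \kk[P]$. The IDP hypothesis says exactly that $R$ is generated in degree one, and by Hochster's theorem $R$ is then a normal Cohen--Macaulay domain of Krull dimension $d+1$ with Hilbert series $\hs_P(t)/(1-t)^{d+1}$. Choosing generic linear forms $\theta_1, \dots, \theta_{d-1} \in R_1$ yields a regular sequence by Cohen--Macaulayness, so $B' := R/(\theta_1, \dots, \theta_{d-1})$ is two-dimensional and Cohen--Macaulay. By Bertini's theorem, the projective curve $C := \Proj B'$ is reduced, irreducible, and non-degenerate in its projective embedding. A further generic linear form $\theta_d \in R_1$ cuts $C$ in a reduced zero-dimensional scheme $\Gamma \subseteq C$ of degree $\deg C = \hs_P(1)$, whose coordinate ring $B := B'/\theta_d B'$ is one-dimensional Cohen--Macaulay with Hilbert series $\hs_P(t)/(1-t)$, so that $H_B(k) = \hs_0 + \hs_1 + \cdots + \hs_{\min(k,s)}$.

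A direct computation using $\hs_0 = 1$ shows that for $i + j < s$ Stanley's inequality is equivalent to
$$H_B(i+j) \ \geq\ H_B(i) + H_B(j) - 1.$$
To produce this inequality, exhibit two subspaces of $B_{i+j}$: for generic linear forms $\omega, \omega' \in B_1$, the non-zero-divisor property (which holds since $B$ is Cohen--Macaulay of dimension one and the forms are generic, hence avoid the associated primes of $B$) gives injections $\cdot\, \omega^j : B_i \hookrightarrow B_{i+j}$ and $\cdot\, \omega'^{\,i} : B_j \hookrightarrow B_{i+j}$ with images of dimensions $H_B(i)$ and $H_B(j)$ respectively. The desired inequality then follows provided these two images intersect in exactly the one-dimensional subspace $\langle \omega^j \omega'^{\,i}\rangle$. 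This transversality claim is the content of Harris's Uniform Position Principle applied to $\Gamma$ on the irreducible curve $C$: any larger intersection would force a nontrivial element of the homogeneous ideal $I_\Gamma$ in degree $i+j$, contradicting the control on $I_\Gamma$ provided by uniform position in degrees below the regularity of $\Gamma$.

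The principal obstacle is precisely this last step: justifying the transversality claim that the two principal subspaces meet only in the expected one-dimensional subspace. Uniform position gives that any two equally large subsets of $\Gamma$ have the same Hilbert function, and from this one must extract the particular numerical bound needed and verify that it survives the generic choices of $\omega, \omega'$. The remainder of the paper's work is naturally viewed as extending exactly this package---Bertini's theorem and Harris's Uniform Position Principle---to the semistandard, weighted-projective setting required to replace the IDP hypothesis by the strictly weaker spanning assumption.
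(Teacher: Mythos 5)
Your broad strategy --- pass to the Ehrhart ring, cut with generic linear forms to reach a one-dimensional Cohen--Macaulay quotient $B$ whose Hilbert function is $H_B(k) = \hs_0 + \cdots + \hs_{\min(k,s)}$, and then bound $H_B(i+j)$ from below via the Uniform Position Principle --- is exactly Stanley's and the paper's. The reduction via Bertini and the translation of Stanley's inequality into $H_B(i+j) \geq H_B(i) + H_B(j) - 1$ (using $\hs_0 = 1$) are both correct and match the argument in Section~\ref{sec:proof}.

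The genuine gap is the final step, and it is not merely a loose end. You propose to realize the lower bound as $\dim(V_1 + V_2)$ where $V_1 = \omega^j B_i$ and $V_2 = \omega'^{\,i} B_j$, and then assert that $V_1 \cap V_2 = \langle \omega^j \omega'^{\,i}\rangle$ for generic $\omega,\omega'$. That transversality statement is not ``the content of Harris's Uniform Position Principle'': the UPP says that equicardinal subsets of $\Gamma$ have the same Hilbert function, which says nothing directly about intersections of images of multiplication maps in $B$. Your justification that a larger intersection ``would force a nontrivial element of the homogeneous ideal $I_\Gamma$ in degree $i+j$'' does not lead to a contradiction either: a relation $\omega^j f = \omega'^{\,i} g$ in $B$ lifts to \emph{some} element of $(I_\Gamma)_{i+j}$, and $I_\Gamma$ always contains elements in that degree once $i+j$ is positive; there is no ``control on $I_\Gamma$'' from uniform position that excludes such a relation a priori. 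What Harris actually proves (\cite[Corollary 3.5]{Har}, reproved here as \Cref{cor35}) is the inequality $h_\Gamma(i+j) \geq \min(\#\Gamma,\; h_\Gamma(i) + h_\Gamma(j) - 1)$ by a different mechanism: choose $\Gamma' \subseteq \Gamma$ with $\#\Gamma' = \min(\#\Gamma, h_\Gamma(i)+h_\Gamma(j)-1)$, split $\Gamma'$ at each point $p$ into $V \cup W$ with $V \cap W = \{p\}$, $\#V \le h_\Gamma(i)$, $\#W \le h_\Gamma(j)$, and use the uniform-position characterization $h_{\Gamma''}(\cdot) = \min(h_\Gamma(\cdot), \#\Gamma'')$ together with elementary linear algebra to produce $f \in R_i$, $g \in R_j$ whose product separates $p$ inside $\Gamma'$. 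Running over $p \in \Gamma'$ yields $\#\Gamma'$ linearly independent elements of $B_{i+j}$. The subspace they span is unrelated to your $V_1 + V_2$, so your argument is not a reformulation of the paper's but a distinct claim requiring its own proof, which you have not supplied.
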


The main goal of this paper is to show that this result holds under much weaker assumptions. A $d$-dimensional lattice polytope $P \subseteq \RR^d$ is called \Defn{$\ZZ^d$-spanning} (or just \Defn{spanning} if the ambient lattice is clear from the context) if every lattice point in $\ZZ^d$ is an affine integer combination of the lattice points in $P$. Clearly, IDP implies spanning.

\begin{ex}
  \label{Reeveex}
  Every $2$-dimensional lattice polytope is spanning, whereas the Reeve-simplex $\conv \rleft( 0, e_1, e_2, e_1 + e_2 + r e_3 \rright) \subseteq \RR^3$ is not spanning for $r \ge 2$. However, the Reeve-bipyramid \linebreak $\conv \rleft(-e_3, 0, e_1, e_2, e_1 + e_2 + r e_3 \rright)$ is spanning for $r \ge 2$. 
\end{ex}

The previous example also illustrates the huge qualitative difference between spanning and IDP. The Reeve-bipyramid has only $5$ lattice points but arbitrarily large volume. On the other hand, if an IDP lattice polytope has a fixed number of lattice points, then its volume must be bounded. 

Our main result is the following theorem.

\begin{thm}
  \label{thm:main}
  Let $P$ be a spanning lattice polytope of degree $s$. Then, for every $i,j \in \NN$ with $i + j < s$, 
  \begin{equation}
    \label{eq:main}
      \hs_1 + \hs_2 + \dotsb + \hs_i \leq \hs_{j+1} + \hs_{j+2} + \dotsb + \hs_{j+i}.
  \end{equation}
\end{thm}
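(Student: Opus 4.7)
The plan is to reduce Theorem~\ref{thm:main} to a statement about the Hilbert function of a hyperplane section of a curve in weighted projective space, and then invoke a Uniform Position Principle in this semistandard setting. Let $R$ denote the Ehrhart ring of $P$, i.e., the affine semigroup algebra of $\cone(P \times \set{1}) \cap \ZZ^{d+1}$. It is a normal (hence Cohen--Macaulay) graded $\kk$-domain of Krull dimension $d+1$ with Hilbert series $\hs_P(t)/(1-t)^{d+1}$. The spanning hypothesis is equivalent to the statement that the subring of $R$ generated by the degree-one piece $R_1$ has the same field of fractions as $R$; this is the (much weaker) analogue of the IDP identity that the proof exploits in place of Stanley's hypothesis in \Cref{thm:idp}.

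Next, I would carry out a Bertini-style reduction. Pick $d-1$ general elements $f_1, \ldots, f_{d-1} \in R_1$ forming a regular sequence and set $\Rone \coloneqq R/(f_1, \dotsc, f_{d-1})$, a $2$-dimensional Cohen--Macaulay graded ring with Hilbert series $\hs_P(t)/(1-t)^2$. The geometric object $C \coloneqq \Proj \Rone$ is a projective curve in a weighted projective space $\PPwt$, and a version of Bertini's theorem adapted to the semistandard situation is needed to ensure that $C$ is geometrically integral. A further general $g \in \Ronep$ is a nonzerodivisor on $\Rone$ and cuts out a finite reduced subscheme $\Gamma \subset C$ whose homogeneous coordinate ring $\Rone/(g)$ has Hilbert function exactly $\hs_P$.

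I would then apply the Uniform Position Principle to the curve $C \subseteq \PPwt$ to conclude that the points of $\Gamma$ are in \emph{uniform position} with respect to the linear subsystem inherited from $R_1$. Granting uniform position, a classical argument of Harris compares the Hilbert functions of cardinality-matched subsets of $\Gamma$ and yields the desired rank inequalities; translated back, they become
\[
  \hs_1 + \hs_2 + \dotsb + \hs_i \leq \hs_{j+1} + \hs_{j+2} + \dotsb + \hs_{j+i}, \qquad i+j < s,
\]
which is exactly the statement of \Cref{thm:main}.

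The two main obstacles are precisely the two extensions flagged in the abstract. First, extending Bertini's theorem to the semistandard setting requires controlling the behaviour of a general degree-one section in a graded ring that is not standard-graded; the spanning hypothesis is meant to provide the necessary genericity, but the technical verification is delicate. Second, Harris's Uniform Position Principle rests on showing that the monodromy of the universal hyperplane section acts as the full symmetric group on a generic fibre, and this input must be re-established for curves in weighted projective space. I expect the bulk of the work to lie in these two steps, while the final deduction of \Cref{thm:main} from uniform position will follow the classical argument essentially verbatim.
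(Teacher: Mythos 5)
Your proposal follows essentially the same route as the paper: realize $\hs_P$ through the Ehrhart ring, reduce to a curve in weighted projective space via a Bertini theorem adapted to the semistandard (``virtually standard'') setting, then invoke a weighted Uniform Position Principle and finish with Harris's Hilbert function estimate; the two flagged difficulties are precisely the paper's Theorems on Bertini and UPP. The one computational slip is your assertion that the coordinate ring of the hyperplane section $\Gamma$ has Hilbert function \emph{exactly} $\hs_P$: after cutting the $(d+1)$-dimensional Ehrhart ring by $d$ regular linear forms, its Hilbert series is $\hs_P(t)/(1-t)$, so $h_\Gamma(k) = \hs_0 + \dotsb + \hs_k$ (partial sums), with $\deg\Gamma = \hs_0 + \dotsb + \hs_s$; the inequality \eqref{eq:main} then drops out of Harris's bound $h_\Gamma(i+j) \ge \min(\#\Gamma,\, h_\Gamma(i)+h_\Gamma(j)-1)$ applied to these partial sums, not to the $\hs_k$ themselves.
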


We note that \Cref{thm:main} is false if $P$ is not spanning:
\begin{ex}
  The join of the interval $[0,3] \subseteq \RR$ and the Reeve-simplex with $r=2$ is not spanning and has $\hs$-vector $(1+2t) (1+t^2) = 1+2t + t^2 + 2t^3$ (see \cite[Lemma 1.3]{HT:LowerBounds}). Thus the special case for $i=1=j$ of \Cref{thm:main} does not hold.
\end{ex}

Yanagawa \cite{yanagawaUPP} found a further extension of \cite[Proposition 3.4]{Stanley:CMDom} for IDP polytopes. It turns out that his result does not hold for spanning lattice polytopes, see for example \cite[Example 1.5]{NOG}.

\subsection{Applications}
\label{tilde-subsec}

Let us give some applications and relate Theorem \ref{thm:main} to previously known results. For this recall that one can associate a spanning lattice polytope $\tilde{P}$ to {\em every} lattice polytope $P$: $\tilde{P}$ is given by the same vertices as $P$, however, considered as a lattice polytope with respect to the lattice spanned by the lattice points in $P$. In particular, $P$ is spanning if and only if $P = \tilde{P}$. For instance, the spanning lattice polytope associated to a Reeve-simplex (see \Cref{Reeveex}) is the unimodular simplex (a lattice simplex whose vertex set consists of the origin and a lattice basis). 

\begin{cor}
  \label{cor:spc-i1}
  For any lattice polytope $P$, we have $ \hs_1(P) \leq \hs_j(P)$ for $1 \leq j < \deg\tilde{P}$.
\end{cor}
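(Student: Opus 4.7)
The plan is to reduce the statement to \Cref{thm:main} applied to $\tilde{P}$. By construction, $\tilde{P}$ is a spanning lattice polytope with respect to the affine sublattice $L \subseteq \ZZ^d$ generated by the lattice points of $P$, so the theorem is available for it. The argument combines three ingredients: an identity, the application of \Cref{thm:main}, and a coefficient-wise monotonicity of $\hs$-vectors.

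First, I would observe that $P \cap \ZZ^d = P \cap L$: one inclusion is clear since $L \subseteq \ZZ^d$, and the other holds because every lattice point of $P$ lies in $L$ by the very definition of $L$. Counting lattice points and using the formula $\hs_1(Q) = \#(Q \cap \mathrm{Lattice}) - (\dim Q + 1)$ gives $\hs_1(P) = \hs_1(\tilde{P})$. Next, since $\tilde{P}$ is spanning, I would apply \Cref{thm:main} to $\tilde{P}$ with $i = 1$ and $j' = j - 1$ (the case $j = 1$ being trivial); the hypothesis $i + j' = j < \deg \tilde{P}$ then yields $\hs_1(\tilde{P}) \le \hs_j(\tilde{P})$.

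Third, I would establish the coefficient-wise monotonicity $\hs_j(\tilde{P}) \le \hs_j(P)$ for every $j$. For this, fix a triangulation of $P$ into $L$-lattice simplices together with a compatible half-open decomposition $P = \bigsqcup_\sigma \sigma^{\mathrm{ho}}$. Each half-open piece $\sigma^{\mathrm{ho}}$ contributes to $\hs$ a polynomial of the form $\sum_{x} t^{\mathrm{ht}(x)}$, where $x$ ranges over the lattice points in the corresponding half-open fundamental parallelepiped of $\cone(\{1\} \times \sigma)$. Since $L \subseteq \ZZ^d$, passing from $L$ to $\ZZ^d$ only enlarges these point sets, so each simplex contribution grows coefficient-wise. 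Summing over $\sigma$ yields $\hs(\tilde{P}) \le \hs(P)$ coefficient-wise, and chaining the three steps gives $\hs_1(P) = \hs_1(\tilde{P}) \le \hs_j(\tilde{P}) \le \hs_j(P)$.

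The main technical point is the third step; the first two are essentially immediate once one notes the identification $P \cap \ZZ^d = P \cap L$ and invokes the main theorem. (Alternatively, step three could be cited from the literature as a general monotonicity of $\hs$-vectors under lattice coarsening, avoiding the explicit half-open decomposition.)
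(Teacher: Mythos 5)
Your proposal matches the paper's own proof exactly: the same three steps, namely $\hs_1(P) = \hs_1(\tilde{P})$ from equal lattice-point counts, $\hs_1(\tilde{P}) \le \hs_j(\tilde{P})$ from the $i=1$ case of \Cref{thm:main} applied to the spanning polytope $\tilde{P}$, and $\hs_j(\tilde{P}) \le \hs_j(P)$ from monotonicity of $\hs$-vectors under coarsening the ambient lattice. The only cosmetic difference is that you sketch a half-open-decomposition argument for the monotonicity step, whereas the paper simply cites it (Section~3.2 of \cite{NOG}).
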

\begin{proof}
  Since $\tilde{P}$ is obtained by coarsening the ambient lattice, we have $\hs_j(P) \geq \hs_j(\tilde{P})$ (see Section 3.2 of \cite{NOG}). As $P$ and $\tilde{P}$ contain the same number of lattice points (and thus have the same linear $\hs$-coefficient), it follows from case $i = 1$ of Theorem \ref{thm:main} that
  \[
    \hs_1(P) = \hs_1(\tilde{P}) \le \hs_j(\tilde{P}) \le \hs_j(P) \text{.} \qedhere
  \]
\end{proof}

We remark that Corollary \ref{cor:spc-i1} should be viewed as an extension to arbitrary lattice polytopes of the following celebrated result due to Hibi.

\begin{cor}[Hibi's Lower Bound Theorem \cite{Hibi94}]
  \label{cor:hibi-lbt}
  For a $d$-dimensional lattice polytope $P$ with an interior lattice point, we have $\hs_1(P) \le \hs_j(P)$ for $1 \le j < d$.
\end{cor}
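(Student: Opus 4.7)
The plan is to deduce this directly from \Cref{cor:spc-i1} by verifying that the hypothesis on the interior lattice point forces $\deg \tilde{P} = d$. Since $\deg P \le \dim P = d$ with equality precisely when $P$ has a relative interior lattice point (as recalled in the introduction), the assumption gives $\deg P = d$; the task then reduces to transferring this equality from $P$ to $\tilde{P}$.

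First I would observe that $\tilde{P}$ has the same underlying geometric polytope as $P$, only the ambient lattice is coarsened to $\Lambda$, the affine lattice generated by the lattice points of $P$. By construction every lattice point of $P$ already lies in $\Lambda$, so $\tilde{P}$ and $P$ have exactly the same set of lattice points, and in particular an interior lattice point of $P$ remains an interior lattice point of $\tilde{P}$ (interiority being a purely topological condition). Since $\dim \tilde{P} = d$ as well, this forces $\deg \tilde{P} = d$.

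Now I would simply apply \Cref{cor:spc-i1}, which gives $\hs_1(P) \le \hs_j(P)$ for all $1 \le j < \deg \tilde{P} = d$, as required. There is no real obstacle here; the only point of substance is the compatibility of the interior lattice point with the coarsened lattice $\Lambda$, and that is immediate from the definition of $\tilde{P}$.
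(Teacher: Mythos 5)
Your proof is correct and is essentially identical to the paper's: both observe that an interior lattice point of $P$ remains one for $\tilde{P}$, hence $\deg\tilde{P}=d$, and then invoke \Cref{cor:spc-i1}. You merely spell out the lattice-coarsening compatibility in more detail than the paper's one-line argument.
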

\begin{proof}
  If $P$ has an interior lattice point, then so has $\tilde{P}$, and thus $\deg\tilde{P} = d$.
\end{proof}

Furthermore, Corollary \ref{cor:spc-i1} implies a new proof of the following main result of the authors' previous joint paper (we refer to \cite{NOG} for its motivation by the Eisenbud-Goto conjecture).

\begin{cor}[{\cite[Theorem 1.3]{NOG}}]
  For a spanning lattice polytope $P$, we have $1 \le \hs_j(P)$ for all $j = 0, \ldots, \deg{P}$.
\end{cor}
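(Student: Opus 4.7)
The plan is to bootstrap off \Cref{cor:spc-i1} after dispatching two trivial endpoints. The coefficient $\hs_0(P) = 1$ is immediate from the generating function \eqref{hs-main} (set $k = 0$), and $\hs_{\deg P}(P) \geq 1$ holds by the very definition of the degree. It therefore remains to establish $\hs_j(P) \geq 1$ in the middle range $1 \leq j < \deg P$, which in particular forces $\deg P \geq 2$.

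For such $j$ the spanning hypothesis gives $\tilde{P} = P$, so $\deg \tilde{P} = \deg P$, and \Cref{cor:spc-i1} applied to $P$ itself yields $\hs_1(P) \leq \hs_j(P)$. The problem thus reduces to showing $\hs_1(P) \geq 1$ whenever $\deg P \geq 2$. Reading the generating function at $k = 1$ gives $\hs_1(P) = \#(P \cap \ZZ^d) - (d+1)$, so the only way to have $\hs_1(P) = 0$ is for $P$ to be a $d$-dimensional lattice simplex whose only lattice points are its $d+1$ vertices.

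The single substantive step, and the main obstacle, is to rule out this degenerate scenario using the spanning hypothesis. The vertices of $P$ are affinely independent in $\RR^d$ and, by spanning, integer-affinely generate $\ZZ^d$; any such $(d+1)$-element collection must in fact form an affine $\ZZ$-basis of $\ZZ^d$, so $P$ is a unimodular simplex. But then $\hs(P) = (1, 0, \ldots, 0)$ and $\deg P = 0$, contradicting $\deg P \geq 2$. This completes the argument.
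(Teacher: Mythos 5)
Your proof is correct and follows essentially the same route as the paper: both reduce the middle range $1 \le j < \deg P$ to $\hs_1 \le \hs_j$ via \Cref{cor:spc-i1} (using $P = \tilde P$), and both dispose of the degenerate case $\hs_1(P) = 0$ by observing that $P$ then has exactly $d+1$ lattice points, so spanning forces the vertices to be an affine lattice basis and hence $\deg P = 0$. The only difference is cosmetic: you handle the endpoints $j=0,\deg P$ up front, while the paper phrases it as a dichotomy on whether $\hs_1(P)$ is zero or positive.
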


\begin{proof}
  If $h^*_1(P) \ge 1$, then the statement follows from Corollary~\ref{cor:spc-i1} (note that $P = \tilde{P}$ and $h^*_{\deg P} \not= 0$). If $h^*_1(P)=0$, then as $h^*_1 = |P \cap \ZZ^d| - d - 1$, the lattice polytope has precisely $d+1$ many lattice points. Hence, the spanning property implies that its vertex set forms an affine lattice basis. In particular, $h^*_P(t)=1$, so $\deg P = 0$.
\end{proof}

\subsection{Outline of the proof of Theorem~\ref{thm:main}}
\label{sec:proof}

In this section, we will give the proof of Theorem~\ref{thm:main}. We will assume knowledge of some basic facts, details can be found in the subsequent sections. 

Stanley's proof of Theorem~\ref{thm:idp} was elegant and short. It used arguments from commutative algebra and algebraic geometry, and relied on the fact that all above invariants have an interpretation in terms of algebraic invariants of the corresponding Ehrhart ring. Here, we will follow his approach, however, we will have to adapt his two main algebraic ingredients to our more general situation. 

Let $\kk$ be an algebraically closed field of characteristic $0$. The \Defn{Ehrhart ring} of a lattice polytope $P \subseteq \RR^d$ is the semigroup algebra of the lattice points in $C$, i.e., $\kk[P] \coloneqq \kk[ C \cap \ZZ^{d+1}]$, where $C = \cone \rleft( \{ 1 \} \times P \rright) \subseteq \RR^{d+1}$ is the cone over $P$. The additional coordinate in the construction of $C$ yields a natural grading on $\kk[P]$ such that the Hilbert series is exactly the generating series of the Ehrhart polynomial. By Hochster's Theorem \cite[Theorem 1]{hochster}), the Ehrhart ring is a Cohen-Macaulay domain.

In the IDP case, the Ehrhart ring is standard (generated in degree $1$), hence, the natural grading on the Ehrhart ring yields a natural embedding of $\Proj(\kk[P])$ into a projective space. By using the classical Bertini's theorem, Stanley reduces the proof to a generic hyperplane section of $\Proj(\kk[P])$. Successively, it gets reduced to the case of a curve in projective space.

For a general lattice polytope, its Ehrhart ring is only semistandard (see Definition~\ref{def:semistandard}). We need an algebraic counterpart of the spanning property of lattice polytopes. For this, let us recall that if a positively graded finitely generated $\kk$-algebra $R$ with $R_0 = \kk$ is semistandard, then there is a finite morphism $\Proj(R) \to \Proj(\Rone)$. If this morphism is even birational, then we say that $R$ is {\em virtually standard}. We refer to Definition~\ref{def:spanning} for the precise algebraic definition and to \Cref{prop:spanning} for its geometric meaning. If $P$ is spanning, then $\kk[P]$ is virtually standard (cf. \Cref{prop:ehrhartspanning}). 

Here is our first main algebraic result, its proof is the content of Section~\ref{sec:spanning} (see \Cref{thm:Bertini} and \Cref{thm:Bertini-spanning}).

\begin{thm}[Bertini for semistandard / virtually standard CM-domains] 
  \label{main-1}
  Let $R$ be a semistandard (respectively, virtually standard) Cohen-Macaulay domain of dimension $d \ge 3$ over an algebraically closed field $\kk$ of characteristic $0$. Then for a generic linear form $\theta$, $R/\langle \theta \rangle$ is again a semistandard (respectively, virtually standard) Cohen-Macaulay domain.
\end{thm}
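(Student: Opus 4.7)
The plan is to reduce the problem to the classical Bertini theorem applied to the standard-graded subring $R^{(1)} \subseteq R$, and then transfer the conclusion to $R$ through the finite surjective morphism $\pi \colon X \coloneqq \Proj R \to Y \coloneqq \Proj R^{(1)}$ coming from the inclusion $R^{(1)} \hookrightarrow R$. Since $R$ is semistandard, $R$ is a finite $R^{(1)}$-module, so $\pi$ is indeed finite and surjective with $X$ and $Y$ both integral projective of dimension $d - 1 \geq 2$; moreover $R_1 = R^{(1)}_1$, so any linear form $\theta \in R$ corresponds to a hyperplane $H_\theta$ in the ambient $\PP^N$ of the standard embedding $Y \hookrightarrow \PP^N$. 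Because $R$ is a domain, every nonzero $\theta \in R_1$ is a non-zerodivisor, so $R/\theta R$ is automatically Cohen-Macaulay of dimension $d - 1 \geq 2$. The task thus reduces to proving, for generic $\theta$, that $R/\theta R$ is a graded domain and that it is (virtually) standard.

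The key step is the domain property. Here I would invoke a Bertini-type theorem for morphisms (in the spirit of Jouanolou): since $X$ is integral over an algebraically closed field of characteristic $0$ and the morphism $X \to \PP^N$ has image $Y$ of dimension $\geq 2$, the preimage $\pi^{-1}(H_\theta) = \Proj(R/\theta R)$ is irreducible and reduced for a generic $\theta$. To deduce that $R/\theta R$ itself is a domain I would combine this with Cohen-Macaulayness: $R/\theta R$ has no embedded primes, and because its dimension is at least $2$ the irrelevant maximal ideal is not among its minimal primes, so every associated prime of $R/\theta R$ corresponds to a component of $\Proj(R/\theta R)$. Irreducibility then forces a unique associated prime $\pp$, while reducedness of $\Proj(R/\theta R)$ forces $(R/\theta R)_{\pp}$ to be a field, which in turn forces $\pp = 0$.

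For semistandardness of $R/\theta R$, I would use that finiteness of $R$ over $R^{(1)}$ descends: $R/\theta R$ is a finite module over $R^{(1)}/\theta R^{(1)}$, and the natural map $R^{(1)}/\theta R^{(1)} \to R/\theta R$ surjects onto the degree-$1$ subring $(R/\theta R)^{(1)}$, so $R/\theta R$ is finite over $(R/\theta R)^{(1)}$. For the virtually standard case, birationality of $\pi$ provides a nonempty open $U \subseteq Y$ over which $\pi$ restricts to an isomorphism; for a generic $\theta$, the intersection $H_\theta \cap U$ is a nonempty (hence dense) open of $H_\theta \cap Y$, and $\pi$ still restricts to an isomorphism over it. This exhibits $\Proj(R/\theta R) \to \Proj(R^{(1)}/\theta R^{(1)})$ as a birational map between the integral projective schemes produced by the previous step, so $R/\theta R$ is again virtually standard.

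I expect the main obstacle to be the irreducibility part of the Bertini statement for morphisms: the classical hyperplane-section theorem applies immediately to $Y$, but transferring irreducibility from $Y \cap H_\theta$ to $\pi^{-1}(H_\theta)$ requires more care, since $\pi$ need not be flat or unramified. The characteristic-$0$ hypothesis enters essentially at this point, ensuring that the function-field extension $\kk(X)/\kk(Y)$ is separable so that $\pi$ is generically étale; this is what enables the geometric Bertini theorems to produce both the irreducibility and the generic reducedness needed to conclude.
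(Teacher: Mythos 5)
Your proposal is correct and follows essentially the same strategy as the paper: you apply the irreducibility and reducedness Bertini theorems for morphisms (Fulton--Lazarsfeld; characteristic $0$) to the finite map $\Proj R \to \Proj \Rone \hookrightarrow \PP^{n-1}$, and use the Cohen--Macaulay hypothesis to ensure that a generic linear form is a nonzerodivisor and that the resulting quotient has positive depth. The only differences are presentational: the paper routes its argument through the saturation $R/(\langle\theta\rangle : R_+^\infty)$ (so as to state a version without the CM hypothesis) and proves the virtually-standard descent via multiplicities of Hilbert series, whereas you argue directly with $R/\theta R$ via associated primes and transfer birationality geometrically --- equivalent by the paper's Proposition \ref{prop:spanning}, and both proofs implicitly use that $(R/\theta R)^{(1)}$ and $\Rone/\theta\Rone$ have the same $\Proj$ for generic $\theta$.
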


Let $d := \dim \kk[P]$, so $\dim P = d-1$. By iterated application of \Cref{thm:Bertini}, we obtain a regular sequence $\theta_1, \dotsc, \theta_{d-2} \in \kk[P]$ of linear forms such that 
\[
  R \coloneqq \kk[P] / \<\theta_1, \dotsc, \theta_{d-2}\>
\]
is again a virtually standard domain of dimension $2$. If $R$ would be standard, it would define a curve in projective space. In \cite{HarGenus}, Harris shows that a generic hyperplane section of a curve in projective space lies in \emph{uniform position} (cf. Definition \ref{def:upp}), and in \cite{Har} he derives bounds for the Hilbert function of such a set of points. The latter are then used by Stanley to deduce the inequalities in Theorem \ref{thm:main}. In our semistandard situation, the projective variety associated to the Ehrhart ring naturally lives in a \emph{weighted} projective space where not everything behaves as nicely as for the unweighted projective space (see, for instance, \cite{Dolgachev}). More precisely, let us choose a presentation of $R$ as a quotient of a polynomial ring by a homogeneous ideal. This yields an embedding of $C \coloneqq \Proj R$ in a weighted projective space $\PPwt$ (note that several variables of the polynomial ring might have degree $>1$). Note that $C$ is a reduced irreducible curve with virtually standard homogeneous coordinate ring $R$. Let $H$ be a generic hyperplane in $\PPwt$ (i.e., defined by a generic linear form in variables of degree $1$) and set $\Gamma := C \cap H$. 

Luckily, the Uniform Position Principle carries over to this virtually standard situation. This is our second main algebraic result, its proof is the content of Section~\ref{sec:UPP} (see  \Cref{prop:upp} and \Cref{cor35}). Let us note that the result is wrong without the assumption of being virtually standard, see \Cref{ex:nonUPP}.

\begin{thm}[Uniform Position Principle for virtually standard curves]
  \label{main-2}
  For a curve $C \subseteq \PPwt$ with virtually standard homogeneous coordinate ring, a generic hyperplane section $C \cap H$ lies {\em in uniform position}. This implies that for any finite subset $\Gamma'$ of closed reduced points in $\Gamma = C \cap H$, and any integers $i,j$, one has 
  \begin{equation}\label{eq:final}
    h_{\Gamma'} ( i + j ) \geq \min( \#\Gamma', h_\Gamma(i) + h_\Gamma(j) - 1 ) \text{.}
  \end{equation}
  Here, $h_\Gamma(i)$ denotes the dimension of the $i$th-graded component of the homogeneous coordinate ring of $\Gamma$.
\end{thm}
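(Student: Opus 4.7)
The plan is to bootstrap from Harris's classical Uniform Position Principle for curves in standard projective space, using the finite birational morphism to such a space provided by virtual standardness. By virtual standardness, the inclusion $\Rone \hookrightarrow R$ is both finite (semistandardness) and birational, so it induces a finite birational morphism $\pi\colon C = \Proj R \to C' := \Proj \Rone \subseteq \PP(R_1)$, where $C'$ is an irreducible reduced curve in an ordinary projective space and the locus $E \subset C$ on which $\pi$ fails to be an isomorphism is a finite set of closed points. Since hyperplanes in $\PPwt$ are defined by degree-$1$ forms and $R_1 = \Rone_1$, a generic $H \subseteq \PPwt$ corresponds to a generic hyperplane $H' \subseteq \PP(R_1)$. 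For such $H$ I would arrange that $H$ avoids $E$, that $\Gamma := C \cap H$ is reduced (by semistandard Bertini, \Cref{main-1}), and that $\Gamma_\ast := C' \cap H'$ is reduced (by classical Bertini). Then $\pi$ is a local isomorphism at every point of $\Gamma$ and induces a bijection $\Gamma \cong \Gamma_\ast$.

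Next, I would apply Harris's classical Uniform Position Principle to $C'$: over $\kk$ of characteristic $0$, the monodromy of the family of generic hyperplane sections of $C'$ acts as the full symmetric group on $\Gamma_\ast$. Because $\pi$ is a local isomorphism near every point of the generic section, the family $\{C \cap H\}_H$ is, in a neighborhood of its generic member, pulled back from $\{C' \cap H'\}_{H'}$ along $\pi$, so the monodromy lifts to act as the full symmetric group on $\Gamma$. For any subset $\Gamma' \subseteq \Gamma$, its Hilbert function $h_{\Gamma'}$ computed in $\PPwt$ is invariant under this monodromy: a loop in the parameter space of hyperplanes transports $\Gamma'$ through a flat family of reduced subschemes of $\PPwt$ of the same length, and Hilbert functions are constant in flat families of projective subschemes. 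Hence $h_{\Gamma'}$ depends only on $\#\Gamma'$, which is uniform position of $\Gamma$ inside $\PPwt$.

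Finally, I would deduce the Castelnuovo-style inequality \eqref{eq:final} from uniform position by the standard argument. If $\#\Gamma' \le h_\Gamma(i) + h_\Gamma(j) - 1$, uniform position forces $\Gamma'$ to impose independent conditions in degree $i+j$, giving $h_{\Gamma'}(i+j) = \#\Gamma'$. For larger $\Gamma'$, one selects complementary subsets of $\Gamma$ on which appropriate forms of degrees $i$ and $j$ vanish; uniform position guarantees enough such forms exist, and their products in $R_{i+j}$ restrict to linearly independent functions on $\Gamma'$ in sufficient quantity to yield the claimed lower bound. Since the argument uses only the multiplication structure of $R$ together with hyperplanes cut by degree-$1$ forms, it transfers verbatim from the standard setting to $\PPwt$.

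The main obstacle I expect is the transfer step: showing that the symmetric-group monodromy on $\Gamma_\ast$ lifts faithfully to $\Gamma$ and that it preserves the Hilbert function computed with respect to the $\PPwt$-embedding rather than the $\PP(R_1)$-embedding. The local-isomorphism property of $\pi$ near generic hyperplane sections handles the lift, and flatness of the universal hyperplane section handles the Hilbert function. Without virtual standardness the map $\pi$ would fail to be birational and this lifting would break down, consistent with the failure of UPP documented in \Cref{ex:nonUPP}.
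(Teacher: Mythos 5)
Your overall strategy matches the paper's: both reduce to the classical Uniform Position Principle on the standard-projective curve $C' = \Proj(\Rone)$ via the finite birational morphism $\pi\colon C \to C'$ guaranteed by virtual standardness, transfer irreducibility of the incidence correspondence (equivalently, symmetric-group monodromy) from $C'$ to $C$, and then derive \eqref{eq:final} by the standard argument in the homogeneous coordinate ring. The paper's \Cref{lem:inccor} is exactly your lifting step, made rigorous.

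However, there is a genuine gap in your transfer of the uniform-position property itself. You assert that for a subset $\Gamma' \subseteq \Gamma$ transported along a monodromy loop, ``Hilbert functions are constant in flat families of projective subschemes.'' This is false: flatness fixes the Hilbert \emph{polynomial}, not the Hilbert \emph{function}. For a flat family of length-$t$ reduced subschemes, the Hilbert function in a fixed degree $\ell$ is only \emph{lower} semicontinuous (the locus where $h(\ell) < k$ is closed) and can jump along the loop; consider four points in $\PP^2$ degenerating to four collinear points, where $h(1)$ drops from $3$ to $2$. Consequently, moving $\Gamma'$ by monodromy to another subset $\Gamma''$ does \emph{not} by itself give $h_{\Gamma'} = h_{\Gamma''}$. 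What the paper does instead is prove precisely this semicontinuity in the weighted setting (\Cref{lem:minor}), so that $J_{t,\ell,k} \subseteq I_t$ is closed, and then exploits the irreducibility of $I_t$: a proper closed subset of an irreducible variety has smaller dimension, so its projection to $\PPwts$ misses a dense open set, and over any hyperplane $H$ in the complement all $t$-element subsets attain the same, maximal, value of $h(\ell)$. Your proof needs this semicontinuity-plus-genericity step in place of the flatness claim; otherwise the argument that monodromy preserves the Hilbert function (as opposed to just the length) does not go through.
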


The finish of the proof follows now exactly along Stanley's lines.  We observe from \eqref{hs-main} that 
\[
  (1-t)^{d-1} H_{\kk[P]}(t) = \frac{\hs_0 + \hs_1 t + \dotsb + \hs_s t^s}{1-t} = (h^*_0 + \cdots + h^*_s t^s) (\sum_{k=0}^\infty t^k) = \sum_{k=0}^\infty (h^*_0 + \cdots + h^*_k) t^k  \text{.}
\]
Hence, the $k$-th coefficient of the Hilbert series of $\Gamma$ (as it is defined by a regular sequence of length $d-1$) is given by $h^*_0 + \cdots + h^*_k$. As $\#\Gamma = \deg(\Gamma) = h^*_0 + \cdots + h^*_s$, a short computation shows that for $i + j \leq s$, inequality \eqref{eq:final} is equivalent to
\[
  \hs_0 + \hs_1 + \dotsb + \hs_{i+j} \geq \min(\hs_0 + \dotsb + \hs_{s}, (\hs_0 + \dotsb + \hs_{i}) + (\hs_0 + \dotsb + \hs_{j}) - 1)
\]
For $i + j < s$, the left-hand side is strictly less than the first sum of the right-hand side, and thus it has to be at least as large as the second sum. Simplification then yields 
\[
  \hs_{j+1} + \dotsb + \hs_{j+i} \geq \hs_1 + \dotsb + \hs_{i},
\]
and \Cref{thm:main} is proven. $\hfill \qed$

\subsection{What's next?}

Our main result gives further evidence that the quite large class of spanning lattice polytopes is worthwhile to study. On the one hand, it surprisingly shares properties with the much smaller class of IDP polytopes, on the other hand it avoids some of the extreme behavior of non-spanning lattice polytopes such as internal zeros in the $h^*$-vector. There is also a fascinating analogy to another general class of lattice polytopes, namely those that have interior lattice points. Building upon Hibi's Lower Bound Theorem, Stapledon studied in \cite{ALANPAPER} balanced inequalities for $h^*$-vectors of lattice polytopes with interior lattice points. Here, {\em balanced} means that the left side and the right side of an inequality contains the same number of summands. Inequality \eqref{eq:main} is such an example. Using intricate number-theoretic arguments, Stapledon was able to generate an impressive number of balanced inequalities, and even achieved a complete classification up to dimension $6$. As  \Cref{cor:spc-i1} generalizes Hibi's lower bound theorem to spanning polytopes, the big open question is now whether it is possible to prove results similar to those by Stapledon for spanning lattice polytopes.

\subsection{Organization of the paper}
In \Cref{sec:red2curves}, we describe basic properties of semistandard rings and introduce the key notion of virtually standard rings. 
In \Cref{sec:spanning}, we prove \Cref{main-1}. In Section \ref{sec:UPP}, we prove \Cref{main-2}.

\section*{Acknowledgments}

We thank Gabriele Balletti, David Eisenbud, Christian Haase and Mateusz Micha\l ek for several inspiring discussions. We are grateful to Raman Sanyal for several suggestions which resulted in improvements of the presentation of the material. Katth\"an is supported by the Deutsche Forschungsgemeinschaft (DFG), grant KA 4128/2-1. Nill is an affiliated researcher of Stockholm University and partly supported by the Vetenskapsr{\aa}det grant NT:2014-3991. Nill is a PI in the Research Training Group Mathematical Complexity Reduction funded by the German Research Foundation (DFG-GRK 2297). The work on this paper was completed while Katth\"an was in residence at the Mathematical Sciences Research Institute in Berkeley, California, during the Fall 2017 semester.

\section{Semistandard and virtually standard rings}
\label{sec:red2curves}

Throughout, let $\kk$ be a field and $R = \bigoplus_{i\ge0} R_i$ be a positively graded finitely generated $\kk$-algebra with $R_0 = \kk$. Further, denote by $\Rone = \kk[ R_1] \subseteq R$ the subalgebra generated by the elements of degree at most $1$, which is also a positively graded finitely generated $\kk$-algebra.

\subsection{Semistandard rings}

Here, we collect some basic results on semistandard rings and their associated projective varieties. 
Recall that $R$ is called \Defn{standard} if it is generated in degree $1$, i.e., if $R = \Rone$. A weaker condition due to Stanley is the following:

\begin{defn}[\cite{Stanley:CMDom}]
  \label{def:semistandard}
  A graded ring $R$ is called \Defn{semistandard} if it is integral over $\Rone$ or equivalently if $R$ is finitely generated as an $\Rone$-module (see, for instance, \cite[Corollary 4.5]{Eis}).
\end{defn}
	
\begin{rem} A lattice polytope $P$ is IDP if and only if its Ehrhart ring is standard (so, $\kk[P] = \kk[P]^{(1)}$). For non-IDP lattice polytopes, the Ehrhart ring is only semistandard, while the Ehrhart ring of a \emph{rational} polytope need not even to be semistandard.
\end{rem}

We denote by $R_+ \coloneqq \bigoplus_{i>0} R_i \subseteq R$ the \Defn{irrelevant ideal} of R. We denote by $\Proj(R)$ its associated separated scheme of relevant prime ideals. For a  homogeneous element $f \in R$ and a prime ideal $\qq$, we define as usual $D_+(f)$ as the set of relevant prime ideals not containing $f$, and $R_{(f)}$ and $R_{(\qq)}$ as the degree $0$ part of the respective localizations. We refer to \cite[Section 13.2]{GW} and \cite{hartshorne} for more details. We write $\# \mathfrak{M}$ for the cardinality of a set $\mathfrak{M}$.
\begin{lem}\label{lem:gen-lin}
  Let $R$ be semistandard.
  \begin{enumerate}
    \item Let $I \coloneqq \<R_1\>$ be the ideal generated by all elements of degree $1$. Then $\sqrt{I} = R_+$.
    \item Assume that $\#\kk = \infty$. Let $\pp_1, \dotsc, \pp_r \subseteq R$ be homogeneous prime ideals, and assume that $\pp_i \neq R_+$ for all $i$. Then a generic linear form is not contained in any of the $\pp_i$.
    \item The inclusion map $\Rone \hookrightarrow R$ induces a finite morphism $\Proj(R) \to \Proj(\Rone)$.
  \end{enumerate}
\end{lem}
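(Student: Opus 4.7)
The plan is to prove the three parts in order; (1) is the key input, and (2), (3) are standard formal consequences of it combined with the semistandard hypothesis.

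For (1), the containment $\sqrt{I}\subseteq R_+$ is immediate. For the reverse, I would use integrality: every homogeneous $f\in R_+$ of positive degree satisfies a monic integral equation over $\Rone$, and by extracting graded pieces the equation can be taken homogeneous, say $f^n + a_{n-1}f^{n-1} + \dotsb + a_0 = 0$ with $a_i \in \Rone$ homogeneous of degree $(n-i)\deg f > 0$. Each such $a_i$ therefore sits in positive degree of $\Rone$, which is $\Rone\cdot R_1 \subseteq I$, and rearranging gives $f^n \in I$.

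For (2), the approach is prime avoidance restricted to the linear piece $R_1$. The crucial use of (1) is to rule out $\pp_i\cap R_1 = R_1$: if this held, then $I\subseteq\pp_i$, hence $R_+ = \sqrt{I}\subseteq\pp_i$, forcing $\pp_i = R_+$ (the unique maximal homogeneous ideal), contradicting the hypothesis. Each $\pp_i\cap R_1$ is thus a proper $\kk$-subspace of the finite-dimensional space $R_1$, and since $\kk$ is infinite, a finite union of proper linear subspaces cannot cover $R_1$; the complement is even Zariski-dense, which is exactly what ``generic'' means.

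For (3), part (1) again does the work of showing the morphism is defined on all of $\Proj R$: the basic opens $D_+(f)$ with $f\in R_1 \subseteq \Rone$ cover $\Proj R$, because $\sqrt{\langle R_1\rangle} = R_+$ means no relevant prime of $R$ contains all of $R_1$, and each such $f$, being in $\Rone$, gives a well-defined map $D_+(f)\subseteq \Proj R \to D_+(f)\subseteq \Proj\Rone$. For finiteness on each chart, I would pick homogeneous $\Rone$-module generators $b_1,\dotsc,b_r$ of $R$ with $d_i = \deg b_i$, and verify that the $b_i/f^{d_i}$ generate $R_{(f)}$ over $(\Rone)_{(f)}$: any degree-$0$ element $b/f^k$ of $R_f$ can be written $b = \sum a_i b_i$ with $a_i \in \Rone$ of degree $k-d_i$, and then $a_i/f^{k-d_i}$ genuinely lands in $(\Rone)_{(f)}$ precisely because $\deg f = 1$. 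The only place requiring a moment of attention is this last bookkeeping, but it is not a real obstacle; (1) is where the semistandard hypothesis earns its keep, and (2), (3) follow formally from it.
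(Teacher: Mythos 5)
Your proof is correct, and for part (1) it takes a genuinely different route from the paper. The paper establishes $\sqrt{I}=R_+$ by first showing $I\cap \Rone = \Rone_+ = R_+\cap\Rone$ and then invoking the Incomparability Theorem for the integral extension $\Rone\subseteq R$ to conclude that $R_+$ is the only homogeneous prime containing $I$. You instead argue directly from the defining integral equation: for homogeneous $f\in R_+$ of positive degree, a graded monic relation $f^n+a_{n-1}f^{n-1}+\dotsb+a_0=0$ over $\Rone$ has each $a_i$ homogeneous of positive degree, hence in $\Rone_+\subseteq I$, so $f^n\in I$ immediately. This is more elementary --- it sidesteps Going-Up/Incomparability entirely and produces an explicit power of $f$ lying in $I$ --- while the paper's argument is shorter if one is willing to cite the structure theory of integral extensions. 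Both are valid. For parts (2) and (3) the paper merely asserts that they ``follow from part (1)'' (citing graded prime avoidance in Eisenbud for (2)); your expansion of (2) via proper linear subspaces of $R_1$ over an infinite field and of (3) via the explicit affine charts $D_+(f)$, $f\in R_1$, with module generators $b_i/f^{d_i}$, fills in exactly the bookkeeping the paper omits, and the observation that $a_i/f^{k-d_i}\in(\Rone)_{(f)}$ because $\deg f=1$ is the correct crux of the finiteness check.
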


\begin{proof}
  Part (1) follows from a version of Noether normalization given in \cite[Lemma I.5.2]{StanleyGreen}, but since no proof is given in \emph{loc.~cit.}, we provide one here for completeness. Note that $I \cap \Rone = R_+ \cap \Rone$. Since every other homogeneous prime ideal of $R$ is contained in $R_+$ and $R$ is an integral extension of $\Rone$, the Incomparability Theorem \cite[Corollary 4.18]{Eis} implies that $R_+$ is the only prime ideal containing $I$ and thus $\sqrt{I} = R_+$.

  Part (2) follows from part (1) using Prime Avoidance, see Lemma 3.3 and Exercise 3.19 (a) of \cite{Eis}. Part (3) follows also from part (1).
\end{proof}

All three parts of the preceding lemma easily fail if $R$ is not semistandard.

Recall the \Defn{saturation} of an ideal $I$ with respect to another ideal $J$: 
\[
  I : J^{\infty} = \set{f \in R \with \text{for all} \; g \in J \text{, there is} \; N \ge 0 \; \text{such that} \; fg^{N} \in I} \text{.} 
\]
The saturation of homogeneous ideals is again homogeneous. We remark that $( 0 : R_+^\infty )$ is also called the \emph{torsion submodule} or the \emph{$0$-th local cohomology module} $H^0_{R_+}(R)$. Note that $( 0 : R_+^\infty )$ has finite length because it is finitely generated and annihilated by a power of $R_+$. The following geometric characterization of reducedness will be used later.

\begin{lem}
  \label{lem:reduced}
  Suppose that $R$ is semistandard and let $I \subseteq R$ be a homogeneous ideal. Then $\Proj(R/I)$ is reduced if and only if $R/(I:R_+^\infty)$ is reduced.
\end{lem}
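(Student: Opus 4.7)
Set $J \coloneqq I : R_+^\infty$, so that the kernel of the natural surjection $R/I \twoheadrightarrow R/J$ equals the $R_+$-torsion submodule $H^0_{R_+}(R/I) = J/I$, which by definition is annihilated by some power of $R_+$.

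For the implication ``$R/J$ reduced $\Rightarrow \Proj(R/I)$ reduced'', I would observe that for any homogeneous $h \in R$ of positive degree the kernel $J/I$ becomes zero in the localization at $h$, since a suitable power of $h$ annihilates it. Hence $(R/I)_{(h)} \cong (R/J)_{(h)}$, and since reducedness passes from $R/J$ to its localization and then to the degree-zero subring, every affine chart of $\Proj(R/I)$ is reduced.

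For the reverse implication, the plan is to verify $\sqrt J \subseteq J$ directly. Take a homogeneous $f \in \sqrt J$ of degree $d$, so $f^n R_+^{N'} \subseteq I$ for some $n$ and $N'$. For every $h \in R_1$ and every sufficiently large $k$, the product $h^k f^n$ lies in $I$, which means that $\bar f / h^d$ represents a nilpotent element of the ring $(R/I)_{(h)}$; since the latter is reduced by hypothesis, $\bar f/h^d = 0$, i.e., $h^{L(h)} f \in I$ for some $L(h)$. Therefore $R_1 \subseteq \sqrt{I:f}$, and hence $\<R_1\> \subseteq \sqrt{I:f}$. By Lemma~\ref{lem:gen-lin}(1) we have $\sqrt{\<R_1\>} = R_+$, so $R_+ \subseteq \sqrt{I:f}$. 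Choosing finitely many homogeneous generators $g_1, \dotsc, g_r$ of $R_+$ with $g_i^{M_i} \in I:f$, and setting $M \coloneqq \max_i M_i$, a pigeonhole argument on length-$rM$ monomials in the $g_i$ gives $R_+^{rM} \subseteq I:f$, whence $f \in J$.

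The main obstacle lies in the reverse implication: reducedness of the affine chart $(R/I)_{(h)}$ directly controls only representatives whose degree is a multiple of $\deg h$. Restricting to $h \in R_1$ sidesteps this issue, since every positive integer is a multiple of $1$, but then one must leverage degree-one elements alone to detect all of $R_+$. This is precisely the content of Lemma~\ref{lem:gen-lin}(1), and it is the step in which the semistandard hypothesis on $R$ is essential.
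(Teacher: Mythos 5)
Your proof is correct, and the hard direction follows a genuinely different route from the paper's. Both you and the paper dispatch the easy implication (saturation reduced $\Rightarrow$ $\Proj(R/I)$ reduced) via the chart identifications $(R/I)_{(h)} \cong (R/J)_{(h)}$, where $J \coloneqq I : R_+^\infty$ and $h$ is homogeneous of positive degree. For the converse, the paper argues the contrapositive: starting from a nonzero homogeneous nilpotent $\overline{f}$ in $A' \coloneqq (R/I)/(0:(R/I)_+^\infty)$, it chooses a homogeneous prime $\pp \supseteq \Ann(\overline{f})$ properly contained in $A'_+$, uses semistandardness via \Cref{lem:gen-lin} to find a \emph{single} degree-one $g$ with $\overline{g} \notin \pp$, and exhibits $f/g^{\deg f}$ as a nonzero nilpotent in the one chart $(R/I)_{(g)}$. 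You instead prove directly that $J$ is radical: given homogeneous $f \in \sqrt{J}$, you extract from reducedness of \emph{every} degree-one chart $(R/I)_{(h)}$ the relation $h^{L(h)} f \in I$, then recombine these using \Cref{lem:gen-lin}(1), Noetherianity of $R_+$, and pigeonhole to promote them to $R_+^N f \subseteq I$, i.e.\ $f \in J$. Both proofs lean on semistandardness at exactly the step where degree-one elements must ``see'' all of $R_+$; the paper's version is shorter because one well-chosen chart suffices (at the cost of justifying that a prime $\pp \subsetneq A'_+$ over $\Ann(\overline{f})$ exists), whereas yours is more mechanical and avoids any prime-existence argument. One small point you use silently and should record: $J$ is homogeneous, hence $\sqrt{J}$ is homogeneous, so verifying $f \in J$ only for homogeneous $f \in \sqrt{J}$ really does yield $\sqrt{J} = J$.
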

\begin{proof}
  Set $A \coloneqq R/I$ and observe that under the quotient morphism $R \to R/I$, the ideal $I : R_+^\infty$ corresponds to $0 : A_+^\infty$. Hence it suffices to show that $\Proj(A)$ is not reduced if and only if $A / ( 0 : A_+^\infty)$ is not reduced. The ``only if'' direction is straightforward to verify. So, suppose that $A' \coloneqq A / ( 0 : A_+^\infty)$ is not reduced. Let $ f \in A$ such that its residue class $\overline{f} \in A'$ does not vanish and $\overline{f}^n = 0$ for some $n > 1$. We may assume that $f$ is homogeneous (otherwise replace $f$ by its homogeneous component of highest degree). Then $\Ann(\overline{f})$, the annihilator of $\overline{f}$, is a proper homogeneous ideal in $A'$, and thus contained in a homogeneous prime ideal $\pp \subseteq A'$. Observe that $A'_+$ can't be the only prime ideal containing $\Ann(\overline{f})$ as otherwise the radical of $\Ann \rleft( \overline{f} \rright)$ would coincide with $A'_+$ implying $(A'_+)^N \overline{f} = 0$ for some $N \ge 0$, i.e., $\overline{f} = 0$ which is not possible. Hence we may take $\pp$ to be a proper subideal of $A'_+$.  Since $A'$ is semistandard, we have $A'_1 \setminus \pp \neq \emptyset$ by \Cref{lem:gen-lin}. In particular, there is $g \in A_1$ such that its residue class $\overline{g} \in A'$ is not contained in $\pp$, and thus $0 \neq f/g^{\deg(f)} \in A_{(g)}$. As $\overline{f}^n = 0 \in A'$, it follows that $f^n g^m = 0 \in A$ for some $m \ge0$, i.e., $A_{(g)}$ is not reduced. In particular $\Proj(A)$ is not reduced.
\end{proof}

The previous statement is false if $R$ is not semistandard:
\begin{ex}
  Consider $R = \kk[x,y]/\<x^2\>$ with $\deg(x) = 1$ and $\deg(y) = 2$. Let $X = \Proj(R)$. Then $R$ is not semistandard as $y$ is not integral over $R^{(1)} = \kk[x]/ \< x^2 \>$. We have 
  \[
    \Gamma( D_+(x), \Om_X) = 0 \quad \text{and} \quad \Gamma( D_+(y), \Om_X) = \kk[y]_{(y)} \text{.}
  \]
  As the open subsets $D_+(x)$ and $D_+(y)$ cover $\Proj(R)$, it follows that $\Proj(R)$ is reduced. On the other hand, one can straightforwardly verify that $\< x^2 \> : \< x, y \>^\infty = \< x^2 \>$, and thus $\kk[x,y]/(\<x^2\> : \<x, y\>^\infty) = \kk[x,y]/\<x^2\>$ is not reduced.
\end{ex}

\subsection{Multiplicity of semistandard rings}
Let us recall some facts about Hilbert functions and Hilbert series. We refer to \cite[Section 4]{BH} for more details.

The \Defn{Hilbert function} of a finitely generated graded $R$-module $M = \bigoplus_{i\ge0} M_i$ is defined as the function $i \mapsto h_M(i) := \dim_\kk M_i$, which for large $i \gg 0$ coincides with a polynomial, the \Defn{Hilbert polynomial} $P_M(t)$ (see \cite[Theorem 4.1.3]{BH}). The \Defn{Hilbert series} is the generating function
\[
  H_M(t) \coloneqq \sum_{i \geq 0} h_M(i) t^i \text{.}
\]
Recall that the (Krull) \Defn{dimension} of $M$ coincides with the longest (strictly increasing) chain of prime ideals in $\Supp(M) = \set{ \pp \in \Spec(R) \with \Ann(M) \subseteq \pp }$.

Let us now assume that $R$ is semistandard. Note that the dimension of $R$ as an $\Rone$-module coincides with its dimension as a module over itself. As $R$ is semistandard, it is a finitely generated module over the standard ring $\Rone$, and hence $M$ is also a finitely generated $\Rone$-module. Its Hilbert series can be written in the form
\[
  H_M(t) = \frac{Q(t)}{(1-t)^{\dim M}}
\]
for a polynomial $Q \in \ZZ[t]$ with $Q(1) > 0$ (see Corollary 4.1.8 and Proposition 4.1.9 of \cite{BH}). The number $e(M) := Q(1)$  is called the \Defn{multiplicity} of $M$. By \cite[Proposition 4.1.9]{BH}, if $\dim M > 0$, then the leading coefficient of $P_M$ coincides with $e(M) / (\dim(M) - 1)!$. The multiplicity of $R$ is also called the \emph{degree} of the variety $\Proj(R)$. 

\begin{ex}
  If $R = \kk[P]$ is the Ehrhart ring of a lattice polytope $P$, then $Q$ is the $\hs$-polynomial and the multiplicity is the normalized volume of $P$, i.e., $\dim(P)! \, {\rm vol}(P)$.
\end{ex}

Note that changing finitely many terms of $H_M(t)$ does not affect the multiplicity. Indeed, if $p \in \ZZ[t]$ is a polynomial, then
\[
  \frac{Q(t)}{(1-t)^{\dim M}} + p(t) = \frac{Q(t) + (1-t)^{\dim M} p(t)}{(1-t)^{\dim M}}
\]
and so the value of the numerator at $1$ does not change.

\subsection{Virtually standard rings}

Here, we introduce the algebraic counterpart of the spanning property of lattice polytopes.

\begin{defn}
  \label{def:spanning}
  We call $R$ \Defn{virtually standard} if it is semistandard and $R$ and $\Rone$ have the same multiplicity.
\end{defn}

\begin{ex}
  \label{ex:not-virt-std}
  Consider $R := \kk[x,y,z,a] / \< a^2 - x^3y\>$, with $\deg x = \deg y = \deg z = 1$ and $\deg a = 2$. It is easy to see that $\Rone = \kk[x,y,z]$ and $R \cong \kk[x,y,z] \oplus a \cdot\kk[x,y,z]$ as $\Rone$-module. Hence $R$ is semistandard and $e(R) = 2$. On the other hand, $e(\Rone) = 1$, so $R$ is not virtually standard.

  As a second example, consider $R' := R / \< xa - z^3\>$. Using \texttt{Macaulay2} \cite{M2}, one can compute that $e(R') = e((R')^{(1)}) = 6$, so this ring is virtually standard.
\end{ex}

Our definition of virtually standard rings captures the property of lattice polytopes being spanning.

\begin{prop}\label{prop:ehrhartspanning}
  The Ehrhart ring $\kk[P]$ of a lattice polytope is virtually standard if and only if $P$ is spanning.
\end{prop}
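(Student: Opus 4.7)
The plan is to reduce the statement to a comparison of multiplicities. After translating so that $0 \in P \cap \ZZ^d$, I would let $M' \subseteq \ZZ^d$ denote the sublattice generated by $P \cap \ZZ^d$ and set $c := [\ZZ^d : M']$, so that $P$ is spanning precisely when $c = 1$. Since the Ehrhart ring of a lattice polytope is always semistandard (as remarked after Definition 2.1), virtual standardness of $\kk[P]$ amounts to the equality $e(\kk[P]) = e(\kk[P]^{(1)})$, and the strategy is to compute both multiplicities as normalized volumes of $P$ — with respect to $\ZZ^d$ and $M'$ respectively — and then compare.

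Set $d := \dim P$. For the larger ring I would invoke the classical formula $e(\kk[P]) = d!\,\mathrm{vol}_{\ZZ^d}(P)$. The delicate step is the computation of $e(\kk[P]^{(1)})$. Here I would observe that $\kk[P]^{(1)}$ is the semigroup algebra of the affine semigroup $T$ generated by $\{(1,v) : v \in P \cap \ZZ^d\}$, which is contained in $\ZZ \times M'$ and generates this lattice as a group. Its saturation in $\ZZ \times M'$ equals $\cone(\{1\} \times P) \cap (\ZZ \times M')$, whose semigroup algebra is precisely the Ehrhart ring $\kk[\tilde{P}]$ of $\tilde{P}$. Thus $\kk[\tilde{P}]$ is the integral closure of $\kk[P]^{(1)}$ inside their common fraction field $\kk(\ZZ \times M')$, and the quotient $\kk[\tilde{P}]/\kk[P]^{(1)}$ is a finitely generated graded torsion $\kk[P]^{(1)}$-module. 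Its support is therefore a proper closed subscheme of $\Spec \kk[P]^{(1)}$ and so has Krull dimension strictly less than $d+1$, so the Hilbert polynomials of $\kk[P]^{(1)}$ and $\kk[\tilde{P}]$ share the same leading term, yielding $e(\kk[P]^{(1)}) = e(\kk[\tilde{P}]) = d!\,\mathrm{vol}_{M'}(P)$.

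To finish, I would compare the two volume normalizations: the covolume of $M'$ is $c$ times the covolume of $\ZZ^d$, so $\mathrm{vol}_{M'}(P) = \mathrm{vol}_{\ZZ^d}(P)/c$. Combining gives $e(\kk[P]) = c \cdot e(\kk[P]^{(1)})$, and equality of multiplicities is equivalent to $c = 1$, i.e., to $P$ being spanning. I expect the main obstacle to be the middle step: correctly identifying the fraction field of $\kk[P]^{(1)}$ as $\kk(\ZZ \times M')$ rather than that of $\kk[P]$ — so that $\kk[\tilde{P}]$ really is the integral closure of $\kk[P]^{(1)}$ inside it — and then justifying via the torsion/support-dimension argument that the passage from $\kk[P]^{(1)}$ to $\kk[\tilde{P}]$ is invisible to the multiplicity.
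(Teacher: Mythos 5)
Your proposal is correct and follows essentially the same approach as the paper: both reduce the statement to the identity $e(\kk[P]) = c\cdot e(\kk[P]^{(1)})$ with $c$ the index of the affine lattice spanned by $P\cap\ZZ^d$. The only difference is that the paper simply cites Bruns--Gubeladze, Theorem 6.54, for this multiplicity formula, whereas you rederive it from scratch via the normalization $\kk[P]^{(1)}\hookrightarrow\kk[\tilde P]$ and the observation that the conductor annihilates the quotient, so that the two rings share a leading Hilbert coefficient; your reconstruction of that step is sound.
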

\begin{proof}
  Let $L \subseteq \ZZ^{d+1}$ be the lattice generated by $(\set{1} \times P) \cap \ZZ^{d+1}$. Set $m \coloneqq [\ZZ^{d+1} : L]$ and note that $P$ is spanning if and only if $m = 1$.  Let $R = \kk[P]$ be the Ehrhart ring associated to $P$. The subring $\Rone$ coincides with the monoid algebra corresponding to the monoid generated by the lattice points in $\set{1} \times P$. By \cite[Theorem 6.54]{BG}, it holds $e(R) = m \cdot e(\Rone)$, and thus the claim follows.
\end{proof}

\begin{rem} In the notation of Subsection~\ref{tilde-subsec}, let us note that for a lattice polytope $P$ one has $e(\kk[\tilde{P}]) = e(\kk[\tilde{P}]^{(1)})$, while $\kk[\tilde{P}] = \kk[\tilde{P}]^{(1)}$ holds if and only if $\tilde{P}$ is IDP.
\end{rem}

Recall that the \Defn{rank} of a finitely generated module $M$ over a domain $S$ is defined as the dimension $\dim_{\Quot(S)} M \otimes_{S} \Quot(S)$. In the case of a domain, we have the following  characterizations of virtually standard rings:
\begin{prop}
  \label{prop:spanning}
  For a semistandard domain $R$, the following are equivalent:
  \begin{enumerate}[(a)]
  \item $R$ is virtually standard.
  \item $R$ has rank $1$ as an $\Rone$-module.
  \item The natural inclusion of the field of fractions $\Quot(\Rone) \hookrightarrow \Quot(R)$ is an isomorphism.
  \item The natural projection $\Spec(R) \to \Spec(\Rone)$ is birational.
  \item The natural projection $\Proj(R) \to \Proj(\Rone)$ is birational.
  \end{enumerate}
\end{prop}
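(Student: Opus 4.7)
My plan is to split the five-way equivalence into manageable pieces: (c) $\Leftrightarrow$ (d) and (b) $\Leftrightarrow$ (c) are essentially formal, (a) $\Leftrightarrow$ (b) is the essential content via a multiplicity computation, and (c) $\Leftrightarrow$ (e) follows from a graded/ungraded fraction-field comparison. The main obstacle will be the multiplicity formula in the middle step; the remaining arguments are routine manipulations.

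The equivalence (c) $\Leftrightarrow$ (d) is the definition of birationality for integral affine schemes: a dominant morphism is birational iff it induces an isomorphism on function fields, and $\Quot(A)$ is the function field of $\Spec(A)$. For (b) $\Leftrightarrow$ (c), I localize the inclusion $\Rone \hookrightarrow R$ at $\Rone \setminus \{0\}$. Since $R$ is a domain and a finitely generated $\Rone$-module (semistandardness), the ring $R \otimes_{\Rone} \Quot(\Rone)$ is a finite-dimensional domain over the field $\Quot(\Rone)$, hence itself a field. Sitting between $R$ and $\Quot(R)$, it must coincide with $\Quot(R)$. Its $\Quot(\Rone)$-dimension equals both the rank of $R$ as an $\Rone$-module and the field-extension degree $[\Quot(R) : \Quot(\Rone)]$; these are $1$ iff (b) (equivalently (c)) holds.

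For the main step (a) $\Leftrightarrow$ (b), I aim to prove $e(R) = r \cdot e(\Rone)$, where $r$ denotes the rank of $R$ as an $\Rone$-module. Choose a graded prime filtration
\[
  0 = M_0 \subset M_1 \subset \dotsb \subset M_n = R
\]
of $R$ as a graded $\Rone$-module with $M_i/M_{i-1} \cong (\Rone/\pp_i)(a_i)$ for homogeneous primes $\pp_i \subset \Rone$ and shifts $a_i \in \ZZ$. Multiplicity is additive over short exact sequences of graded modules of equal dimension, and graded quotients of strictly smaller dimension contribute $0$. Since $\Rone$ is a domain, only the factors with $\pp_i = 0$ contribute, each adding $e(\Rone(a_i)) = e(\Rone)$ (the shift disappears upon evaluation at $t = 1$). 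The number of such factors is exactly $r$, yielding the formula; hence (a) is equivalent to $r = 1$, which is (b).

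For (c) $\Leftrightarrow$ (e), recall that the function field of $\Proj(A)$ for a graded domain $A$ is the degree-zero part of the graded localization $S_A^{-1} A$, where $S_A$ is the set of nonzero homogeneous elements of $A$. Assuming (c), any ratio $a/b$ of elements of $R$ homogeneous of the same degree lies in $\Quot(R) = \Quot(\Rone)$; decomposing any representation $a/b = a'/b'$ with $a', b' \in \Rone$ into homogeneous components and matching degrees produces a representation with $a', b' \in \Rone$ homogeneous of the same degree, giving (e). Conversely, fix a nonzero $t \in R_1 \subseteq \Rone$ (the trivial case $R = \kk$ is immediate). Every homogeneous $x \in R$ of degree $d$ satisfies $x = (x/t^d) \cdot t^d$, where $x/t^d$ lies in the degree-zero part of $S_R^{-1} R$, which by (e) coincides with the degree-zero part of $S_{\Rone}^{-1} \Rone \subseteq \Quot(\Rone)$, while $t^d \in \Rone$. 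Hence $x \in \Quot(\Rone)$, and since $R$ is spanned by its homogeneous elements, $R \subseteq \Quot(\Rone)$, yielding (c).
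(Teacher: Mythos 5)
Your proof is correct, and for the central step it follows a genuinely different route from the paper. For the equivalence (a) $\Leftrightarrow$ (b), you take a graded prime filtration of $R$ as an $\Rone$-module, use additivity of multiplicity, and conclude that $e(R) = r \cdot e(\Rone)$ where $r = \operatorname{rank}_{\Rone} R$; this is a clean general formula (tensoring the filtration with $\Quot(\Rone)$, which is flat, shows the number of factors with $\pp_i = 0$ is exactly $r$). The paper instead proves (a) $\Leftrightarrow$ (c) directly: it applies Hilbert series additivity to the short exact sequence $0 \to \Rone \to R \to R/\Rone \to 0$, observes that $e(R) = e(\Rone)$ iff $\dim(R/\Rone) < d$, uses that $R$ is a domain to translate this to $\Ann(R/\Rone) \neq 0$, i.e.\ $(R/\Rone) \otimes_{\Rone} \Quot(\Rone) = 0$, and then tensors the sequence and invokes Lemma~\ref{lem:quot}. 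Your route buys the stronger statement $e(R) = r \cdot e(\Rone)$, while the paper's is arguably more elementary in that it avoids prime filtrations. Your handling of (b) $\Leftrightarrow$ (c) essentially reproves Lemma~\ref{lem:quot} inline, and your treatment of (c) $\Leftrightarrow$ (e) spells out both implications explicitly via a degree-$1$ element $t$, whereas the paper takes (d) $\Rightarrow$ (e) as clear and proves (e) $\Rightarrow$ (c) by citing the structural isomorphism $A \cong A_0[t,t^{-1}]$ for the graded localization; these are the same idea presented in different form.
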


\begin{rem}
  In view of \Cref{prop:spanning}, a geometric interpretation for the first ring in \Cref{ex:not-virt-std} not being virtually standard is as follows: for each value of $x,y$ and $z$ with $xy \neq 0$, there are two possible values of $a$ satisfying the defining relation. In particular, $\Proj(R) \to \Proj(\Rone)$ is not birational.
  Moreover, it is clear that the rank of $R$ as an $\Rone$-module is $2$.
  
  On the other hand, the additional equation for the second ring in \Cref{ex:not-virt-std}  forces $a$ to lie in the quotient field of $(R')^{(1)}$, and thus $\Quot((R')^{(1)}) = \Quot(R')$. So this ring is virtually standard by the proposition above.
\end{rem}

We need the following lemma for the proof of \Cref{prop:spanning}.
\begin{lem}
  \label{lem:quot}
  If $R$ is a semistandard domain, then the natural map $R \otimes_{\Rone} \Quot(\Rone) \to \Quot(R)$ is an isomorphism.
\end{lem}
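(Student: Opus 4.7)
My plan is to identify the tensor product with a localization of $R$ and then use a field-theoretic sandwich argument. The key observation is that, because $\Rone \setminus \{0\}$ is a multiplicatively closed subset of the domain $R$, the tensor product $R \otimes_{\Rone} \Quot(\Rone)$ is naturally isomorphic to $S^{-1}R$ for $S \coloneqq \Rone \setminus \{0\}$, and the natural map from this to $\Quot(R)$ sends $r/s$ to $r/s$ viewed inside the fraction field. Since $S$ consists of nonzero elements of the domain $R$, this map is injective, so $S^{-1}R$ can be viewed as a subring of $\Quot(R)$ containing $R$.

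It remains to show $S^{-1}R = \Quot(R)$. First I would record that $S^{-1}R$ is an integral domain (as a subring of a field) and a $\Quot(\Rone)$-algebra. Since $R$ is semistandard, it is finitely generated as an $\Rone$-module, and localization commutes with finite generation; hence $S^{-1}R$ is a finitely generated $\Quot(\Rone)$-module, i.e., a finite-dimensional vector space over the field $\Quot(\Rone)$. The standard fact that a finite-dimensional integral domain over a field is itself a field (multiplication by any nonzero element is an injective, hence bijective, linear map) then shows that $S^{-1}R$ is a field.

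Finally, since $S^{-1}R$ is a field containing $R$ and is contained in $\Quot(R)$, the universal property of the fraction field forces $S^{-1}R = \Quot(R)$, which proves that the natural map is an isomorphism.

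I do not expect any serious obstacle here: each step is a routine application of basic commutative algebra once one recognizes the tensor product as a localization. The only mild subtlety is remembering that semistandard is equivalent to finite generation as an $\Rone$-module (already recorded in \Cref{def:semistandard}), which is precisely what feeds the finite-dimensionality step. Alternatively, one could give a direct surjectivity argument: any $s \in R \setminus \{0\}$ satisfies a monic integral equation $s^n + c_{n-1}s^{n-1} + \dotsb + c_0 = 0$ with $c_i \in \Rone$, and choosing this equation of minimal degree forces $c_0 \neq 0$ (since $R$ is a domain), so $1/s = -(s^{n-1} + c_{n-1}s^{n-2} + \dotsb + c_1)/c_0 \in S^{-1}R$; this gives surjectivity of $S^{-1}R \to \Quot(R)$ on elements of the form $1/s$, and hence on all of $\Quot(R)$. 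Either route works, but the field-theoretic sandwich argument is shorter.
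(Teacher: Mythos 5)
Your proof is correct and follows essentially the same strategy as the paper's: identify $R \otimes_{\Rone} \Quot(\Rone)$ as a finite-dimensional domain over the field $\Quot(\Rone)$, conclude it is a field, and then deduce the isomorphism from the minimality of $\Quot(R)$. The only cosmetic difference is how injectivity of the natural map is established: you argue directly via localization at a set of nonzero divisors, while the paper notes the map is nonzero and uses that a nonzero map out of a field is injective; both are fine.
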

\begin{proof}
  It is straightforward to verify that $A \coloneqq R \otimes_{\Rone} \Quot(\Rone)$ is a domain. Moreover, since $R$ is finite over $\Rone$, it follows that $A$ is a finitely generated domain over $\Quot(\Rone)$, and thus a field.
  
  The natural map $\beta\colon A \to \Quot(R)$ is easily seen to be nonzero, and hence it is injective.  But then it is also surjective, as $\Quot(R)$ is the smallest field containing $R$.  
\end{proof}

\begin{proof}[Proof of Proposition \ref{prop:spanning}]
  The equivalence of (c) and (d) follows by the usual characterization of birational maps. The equivalence of (b) and (c) follows from \Cref{lem:quot}.
	
  Next, we show the equivalence of (a) and (c). The Hilbert series of $\Rone$ and $R$ are
  \[
    H_\Rone(t) = \frac{Q_\Rone(t)}{(1-t)^d},\quad H_R(t) = \frac{Q_R(t)}{(1-t)^d}, 
  \]
  where $d$ denotes the Krull dimension of $R$. Hilbert series are additive along short exact sequences (because vector space dimensions are), and hence the short exact sequence
  \begin{equation}\label{eq:exseq1}
    0 \too \Rone \too R \too R/\Rone \too 0
  \end{equation}
  of $\Rone$-modules implies that
  \[
    H_{R/\Rone}(t) = \frac{Q_R(t) - Q_\Rone(t)}{(1-t)^d} \text{.}
  \]	
  Note that the fraction on the right side is not necessarily reduced so that the numerator polynomial does not necessarily coincide with $Q_{R/\Rone}(t)$. The two $\Rone$-modules $R$ and $\Rone$ have the same multiplicity, i.e., $Q_R(1) - Q_\Rone(1) = 0$, if and only if $\dim (R/\Rone) < d$ (cf.~\cite[Cor. 4.1.8]{BH}) where dimensions are taken as $\Rone$-modules (see also \Cref{ex:short-ex-seq} for an illustration). Since $R$ is a domain, the latter statement is equivalent to $\Ann(R/\Rone) \neq 0$ which in turn is equivalent to $R/\Rone \otimes_{\Rone} F = 0$ (cf.~\cite[Prop 2.1]{Eis}), where $F \coloneqq \Quot(\Rone)$. Tensoring \eqref{eq:exseq1} with $F$ yields
  \begin{equation*}\label{eq:exseq2}
    0 \too F \too R \otimes_{\Rone} F \too R/\Rone \otimes_{\Rone} F \too 0.
  \end{equation*}
  We conclude that $R/\Rone \otimes_{\Rone} F = 0$ if and only if the map $F \to R \otimes_{\Rone} F$ is surjective. By Lemma \ref{lem:quot}, this is equivalent to condition (c).

  The implication (d) $\Rightarrow$ (e) is clear. We complete the proof by showing that (e) implies (c). As we are dealing with integral schemes, the morphism $\Proj(R) \to \Proj(\Rone)$ is birational if and only if it induces an isomorphism on the function fields $\kk(\Proj(\Rone)) \to \kk(\Proj(R))$ (see, for instance, \cite[Lemma 9.33]{GW}). Recall that the function field coincides with the local ring at the generic point, which in our situation in both cases is the zero ideal. In particular, the function field $\kk(\Proj(R))$ (resp.~$\kk(\Proj(\Rone))$) is the degree $0$ part $A_0$ (resp.~$A'_0$) of the ring $A$ (resp.~$A'$) obtained by inverting all homogeneous elements of $R$ (resp. $\Rone$) \cite[Proposition II.5.11]{hartshorne}. Moreover, it is well-known that $A \cong A_0[t,t^{-1}] = \kk(\Proj(R))[t,t^{-1}]$, where $t$ is an indeterminate of degree $1$ \cite[Ex. 2.17(c)]{Eis}, and similarly for $A'$.

  Hence, $A_0 \cong A'_0$ already implies that $A \cong A'$. We further conclude that $\Quot(\Rone) = \Quot(A') \cong \Quot(A) = \Quot(R)$. It is straightforward to check that this isomorphism coincides with the natural inclusion $\Quot(\Rone) \hookrightarrow \Quot(R)$.
\end{proof}

\begin{ex}
  \label{ex:short-ex-seq}
  Let $R$ be the first ring of \Cref{ex:not-virt-std}. Then $R/\Rone = a \cdot \kk[x,y,z]$ as $\Rone$-modules. In particular, $\Ann(R/\Rone) = \set{0}$ and thus $\dim(R/\Rone) = \dim(R) =\dim(\Rone) = 3$. Indeed, from the proof of \Cref{prop:spanning}, it follows that $\dim(R/\Rone) = \dim(R) = \dim(\Rone)$ if and only if $R$ is not virtually standard. We obtain $H_{R/\Rone}(t) = t^2/(1-t)^3$, so that $H_R(t) = H_\Rone(t) + H_{R/\Rone}(t)$.
\end{ex}

\section{Bertini's theorem for semistandard, respectively, virtually standard rings}
\label{sec:spanning}

In this section, we prove \Cref{main-1} (see \cite[Proposition~3.2]{Stanley:CMDom} for the standard case). It will follow from combining \Cref{thm:Bertini}(2,3) with \Cref{thm:Bertini-spanning}(1).

\begin{thm}
  \label{thm:Bertini}
  Let $R$ be a semistandard algebra over an algebraically closed field $\kk$ of characteristic $0$. For a generic linear form $\theta \in R_1$, let $S = R/ ( \<\theta\> : R_+^\infty)$. Then $S$ is semistandard and $\dim S = \max(\dim R - 1, 0)$. Moreover,
  \begin{enumerate}
  \item if $R$ is reduced, then $S$ is reduced.
  \item if $R$ is a domain and $\dim R \geq 3$, then $S$ is a domain.
  \item if $R$ is Cohen-Macaulay, then $S$ is also Cohen-Macaulay and, if in addition $\dim R \geq 2$, then $S = R / \<\theta \>$.
  \end{enumerate}
\end{thm}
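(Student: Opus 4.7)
My plan is to deduce all three parts from Bertini-type statements on $\Proj(R)$, together with a non-zero-divisor argument in the Cohen-Macaulay case. The semistandard hypothesis is the crucial ingredient here: although $\Proj(R)$ sits most naturally in a weighted projective space, \Cref{lem:gen-lin}(1) tells us that the linear system $R_1$ is base-point-free on $\Proj(R)$ (because $\sqrt{\<R_1\>} = R_+$), and \Cref{lem:gen-lin}(3) gives a finite morphism $\Proj(R) \to \Proj(\Rone) \subseteq \PP^N$. This is precisely what is needed for classical Bertini to apply, and the main technical obstacle is setting this reduction up cleanly. The translation between scheme-theoretic reducedness of the hyperplane section and ring-theoretic reducedness of the saturated quotient is provided by \Cref{lem:reduced}.

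Before the three parts I would settle the general claims. The ideal $\<\theta\>:R_+^\infty$ is homogeneous and generated in positive degrees, so $S_0=\kk$, and the image of $\Rone$ inside $S$ is precisely $S^{(1)}$; since $R$ is finite over $\Rone$, the quotient $S$ is finite over $S^{(1)}$, so $S$ is semistandard. For the dimension, \Cref{lem:gen-lin}(2) ensures that a generic $\theta \in R_1$ lies outside every minimal prime of $R$, yielding $\dim R/\<\theta\> = \dim R - 1$ whenever $\dim R \geq 1$. Saturating by $R_+^\infty$ removes only components supported at $R_+$, so the dimension is unaffected so long as it remains positive; the edge cases $\dim R \leq 1$ I would check directly.

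For (1) and (2), a generic $\theta \in R_1$ cuts out the hyperplane section $\Proj(R/\<\theta\>)$ of $\Proj(R)$. In characteristic zero the classical Bertini theorems, applied to the base-point-free system $R_1$, show that this section is reduced and, when $\dim \Proj(R) \geq 2$ (i.e.\ $\dim R \geq 3$), irreducible. \Cref{lem:reduced} then yields reducedness of $S$, which gives (1). For (2), observe that by construction $0 :_S S_+^\infty = 0$, so $S_+$ is not an associated prime of zero; hence every minimal prime of $S$ is relevant and corresponds to a component of $\Proj(S)$. Irreducibility of $\Proj(S)$ leaves only one such minimal prime, and reducedness forces it to be zero, so $S$ is a domain.

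For (3), assume $R$ is Cohen-Macaulay. When $\dim R \geq 1$, the equality $\mathrm{depth}\,R = \dim R \geq 1$ implies that $R_+$ is not associated to zero in $R$, so every associated prime of $R$ is relevant; by \Cref{lem:gen-lin}(2) a generic $\theta \in R_1$ avoids all of them and is thus a non-zero-divisor. Then $R/\<\theta\>$ is Cohen-Macaulay of dimension $\dim R - 1$. If $\dim R \geq 2$, this quotient still has positive depth, so $R_+$ is not associated to zero in it; equivalently $\<\theta\>:R_+^\infty = \<\theta\>$, and $S = R/\<\theta\>$ is Cohen-Macaulay, as claimed. The residual cases $\dim R \leq 1$ yield an Artinian $S$, which is automatically Cohen-Macaulay.
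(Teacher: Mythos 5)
Your proposal is correct and follows essentially the same route as the paper's proof: reduce the semistandard case to the projective setting via the finite morphism $\Proj(R) \to \Proj(\Rone) \subseteq \PP^{n-1}$ coming from \Cref{lem:gen-lin}(3), invoke Bertini for morphisms in characteristic zero (reducedness plus irreducibility when $\dim R \geq 3$), translate back to the saturated quotient via \Cref{lem:reduced}, and handle (3) by the nonzerodivisor/depth argument. Two small notes. First, when you appeal to ``classical Bertini applied to the base-point-free system $R_1$,'' you should be explicit that the relevant statements are the Bertini theorems for \emph{morphisms} to $\PP^m$ (Fulton--Lazarsfeld for irreducibility, Cumino--Greco--Manaresi for reducedness in characteristic zero), since $\Proj(R) \to \PP^{n-1}$ is a finite morphism and not in general a closed embedding; the paper is careful to cite these specific versions. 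Second, for part (2) the paper shows directly that $\<\theta\> : R_+^\infty = \sqrt{\<\theta\>}$ (using reducedness and the containment $\<\theta\> : R_+^\infty \subseteq R_+$) and then concludes primality from irreducibility of the fibre; your route through ``$S_+$ is not an associated prime of $S$, so every minimal prime is relevant'' reaches the same conclusion and is a perfectly acceptable variant.
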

We obtain Theorem \ref{thm:Bertini} from the following two classical Bertini-type theorems (see also \cite[Th\'eor\`eme I.6.3]{bertinj}).
\begin{thm}[{\cite[Theorem 1.1]{FLConn}}]
  \label{thm:irred-Bertini}
  Let $X$ be an irreducible algebraic variety over an arbitrary algebraically closed field, and $f \colon X \to \PP^m$ a morphism. Fix an integer $d < \dim \rleft( \overline{f(X)} \rright)$. Then there is a non-empty Zariski-open set $U \subseteq \Grass_{m-d} \rleft( \PP^m \rright)$ such that for all $(m-d)$-planes $H$ in $U$, $f^{-1}(H)$ is irreducible.
\end{thm}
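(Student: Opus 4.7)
The plan is to form the classical incidence variety in $X \times \Grass_{m-d}(\PP^m)$ and proceed by induction on the codimension $d$, with the hyperplane case $d = 1$ serving as the base.  Write $G \coloneqq \Grass_{m-d}(\PP^m)$ and consider
\[
  Z \coloneqq \set{(x, H) \in X \times G \with f(x) \in H}
\]
together with its two projections $p_1 \colon Z \to X$ and $p_2 \colon Z \to G$.  The fibre of $p_1$ over $x$ parametrises those $(m-d)$-planes passing through the single point $f(x) \in \PP^m$, and is therefore a Schubert variety isomorphic to $\Grass_{m-d-1}(\PP^{m-1})$, hence irreducible of constant dimension independent of $x$.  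Since $X$ is irreducible, a standard fibration argument shows that $Z$ is irreducible, and the dimension count forced by $d < \dim \overline{f(X)}$ shows that $p_2$ is dominant.

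I would then induct on $d$, taking as the base case $d = 1$ the classical Bertini theorem for morphisms to projective space: if $\dim \overline{f(X)} \ge 2$, then $f^{-1}(H)$ is irreducible for a generic hyperplane $H \subseteq \PP^m$.  This base case is itself derived via the Stein factorisation of $f$ together with Zariski's connectedness theorem applied to the normalisation of $\overline{f(X)}$.  For the inductive step with $d \ge 2$, I would apply the theorem in codimension $d - 1$, which is legitimate because $\dim \overline{f(X)} > d > d - 1$: this yields an open dense $U' \subseteq \Grass_{m-d+1}(\PP^m)$ of $(m-d+1)$-planes $H'$ for which $f^{-1}(H')$ is irreducible.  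For such an $H'$, the restriction $f' \colon f^{-1}(H') \to H' \cong \PP^{m-d+1}$ has image of dimension at least $\dim \overline{f(X)} - (d - 1) \ge 2$, so applying the base case to $f'$ supplies an open dense $V(H') \subseteq (H')^*$ of hyperplanes $\tilde H \subset H'$ for which $f^{-1}(\tilde H)$ is irreducible; each such $\tilde H$ is itself an $(m-d)$-plane of $\PP^m$.

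To descend back to a generic $(m-d)$-plane of $\PP^m$, I would introduce the two-step flag variety $\mathrm{Fl}(m - d, m-d+1; \PP^m)$, which is irreducible and projects surjectively with irreducible fibres to both $G$ and $\Grass_{m-d+1}(\PP^m)$; the projection $\pi_G$ to $G$ is in fact a $\PP^{d-1}$-bundle and is therefore open.  The conditions ``$H' \in U'$ and $\tilde H \in V(H')$'' carve out a non-empty open subset $W$ of the flag variety, and the image $\pi_G(W)$ is then the desired non-empty Zariski open $U \subseteq G$ for which $f^{-1}(\tilde H)$ is irreducible for all $\tilde H \in U$.  The main obstacle in this strategy is the base case: a clean proof of the classical irreducibility Bertini theorem for morphisms to projective space (and, in positive characteristic, controlling potential inseparability in the Stein factorisation) is the genuinely nontrivial input, whereas the inductive step and the flag-variety descent follow formally from standard dimension-theoretic arguments.
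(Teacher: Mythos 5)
This statement is imported verbatim from Fulton--Lazarsfeld and the paper gives no proof of it, so there is no internal argument to compare against; in the cited source it is deduced from their general connectedness theorem (a relative of the Fulton--Hansen theorem), which is a genuinely different route from yours. Your strategy --- reduce codimension $d$ to the codimension-one case by induction, descending through the flag variety $\mathrm{Fl}(m-d,\,m-d+1;\PP^m)$ --- is the standard Jouanolou-style reduction and is viable in outline, but as written it has two real gaps.

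First, the base case $d=1$ is not an auxiliary classical fact sitting outside the theorem: it \emph{is} the theorem for $d=1$, and it is where all the difficulty lives, as you acknowledge. Stein factorisation does not apply directly because $f$ is an arbitrary morphism rather than a proper one; one must compactify (or work with the graph) and then rule out a nontrivial finite part of the factorisation over an arbitrary algebraically closed field, which is precisely the content of Fulton--Lazarsfeld's connectedness theorem or of Jouanolou's Th\'eor\`eme I.6.3. So your argument is a reduction, not a proof. Second, the descent step is incomplete: the set $W=\set{(\tilde H,H') \with H'\in U',\ \tilde H\in V(H')}$ is not known to be open, nor even constructible, because each $V(H')$ is produced by a separate existence statement with no uniformity in $H'$. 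What your construction actually shows is that $W$ is fibrewise dense over the dense open $U'$, hence dense in the flag variety, hence that the good locus $\set{\tilde H\in\Grass_{m-d}(\PP^m) \with f^{-1}(\tilde H)\ \text{irreducible}}$ is dense in $\Grass_{m-d}(\PP^m)$. To upgrade \enquote{dense} to \enquote{contains a nonempty Zariski-open set} you need this locus to be constructible; that follows from EGA IV 9.7.7 (constructibility of the locus of geometrically irreducible fibres) applied to the second projection of the incidence variety $Z$ that you introduce in your first paragraph and then never use --- its fibre over $\tilde H$ is exactly $f^{-1}(\tilde H)$. With that ingredient added, and the $d=1$ case taken as the external input it really is, the argument closes.
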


\begin{thm}[{\cite[Theorem 1]{CGM:Bertini}}]
  \label{thm:normal-Bertini-hyperplane}
  Let $X$ be an algebraic variety over an algebraically closed field of characteristic $0$, and let $f \colon X \to \PP^m$ be a morphism. If $X$ is reduced, then there is a non-empty Zariski-open set $U \subseteq \Grass_{m-1} \rleft( \PP^m \rright)$ such that for all hyperplanes $H \in U$, $f^{-1}(H)$ is reduced.
\end{thm}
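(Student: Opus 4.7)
My plan is to prove the theorem by the standard incidence-correspondence construction combined with a generic-fibre argument, using the characteristic-zero hypothesis only at the last step.

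Form the universal hyperplane section
\[
  \mathcal{I} = \set{(x, H) \in X \times (\PP^m)^* \with f(x) \in H}
\]
together with its two projections $p_1 \colon \mathcal{I} \to X$ and $p_2 \colon \mathcal{I} \to (\PP^m)^*$. The fibre of $p_1$ over a point $x$ is the hyperplane in $(\PP^m)^*$ parametrising hyperplanes through $f(x)$, so $p_1$ is a Zariski-locally trivial $\PP^{m-1}$-bundle, and in particular smooth. Since $X$ is reduced, smoothness of $p_1$ forces $\mathcal{I}$ to be reduced as well. On the other hand, the fibre of $p_2$ over a hyperplane $H$ is canonically isomorphic, as a scheme, to the scheme-theoretic preimage $f^{-1}(H)$, and $(\PP^m)^* = \Grass_{m-1}(\PP^m)$ is integral. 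The theorem therefore reduces to the statement that $p_2$ has reduced fibres over a dense open subset of its target.

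For this I would invoke the following standard fact: for a finite-type morphism $\pi \colon Y \to Z$ of $\kk$-schemes with $Y$ reduced and $Z$ integral over an algebraically closed field $\kk$ of characteristic zero, there is a non-empty open $U \subseteq Z$ such that $\pi^{-1}(z)$ is geometrically reduced for every $z \in U$. The proof has three ingredients: the scheme-theoretic generic fibre $Y \times_Z \Spec \kk(Z)$ is a localisation of $Y$ and hence reduced; the function field $\kk(Z)$ is perfect in characteristic zero, so ``reduced'' at the generic point upgrades to ``geometrically reduced''; and the locus of $z$ with geometrically reduced fibres is constructible (e.g.\ EGA IV.9.7.7) and thus, containing the generic point, contains a dense open subset of $Z$.

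The main obstacle, and the sole place where the hypothesis $\operatorname{char} \kk = 0$ is genuinely used, is the second step of the above lemma: in positive characteristic an inseparable residue-field extension can introduce nilpotents after base change, so reducedness of the total space $\mathcal{I}$ need not descend to the geometric generic fibre of $p_2$. Perfectness of $\kk(Z)$ in characteristic zero is exactly what rules this out; once it is in hand, applying the lemma to $p_2 \colon \mathcal{I} \to (\PP^m)^*$ produces the required open set $U \subseteq \Grass_{m-1}(\PP^m)$ and completes the proof.
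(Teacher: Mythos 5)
Your proposal is correct. Note, however, that the paper does not prove this statement at all: it is quoted verbatim from Cumino--Greco--Manaresi \cite{CGM:Bertini} as a black box, so there is no internal proof to compare against. Your argument is the standard and clean route: the universal hyperplane section $\mathcal{I} \subseteq X \times (\PP^m)^*$ is a Zariski-locally trivial $\PP^{m-1}$-bundle over $X$ via $p_1$, hence reduced when $X$ is; the fibres of $p_2$ are scheme-theoretically the $f^{-1}(H)$; the generic fibre of $p_2$ is a localisation of $\mathcal{I}$ and hence reduced, which over the perfect field $\kk\rleft((\PP^m)^*\rright)$ upgrades to geometrically reduced; and constructibility of the geometrically-reduced locus (EGA IV.9.7.7) then spreads this out to a dense open $U$. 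All steps check out, and you correctly isolate the only use of $\operatorname{char}\kk = 0$ (perfectness of the function field of the dual space; in characteristic $p$ the geometric generic fibre can acquire nilpotents, which is exactly how the second Bertini theorem fails there). The contrast with the cited source is one of style rather than substance: Cumino--Greco--Manaresi set up an axiomatic framework valid for any property satisfying certain permanence axioms (of which ``reduced'' is one instance), whereas your argument is tailored to reducedness but has the advantage of being short and essentially self-contained modulo standard facts.
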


\begin{proof}[Proof of Theorem \ref{thm:Bertini}]
  The claim about the dimension follows from \Cref{lem:gen-lin}(2) and the fact that saturating by the irrelevant ideal does not change the dimension. The definition implies directly that $S$ is semistandard. Let $\dim_\kk \rleft( \Rone_1 \rright) = n$. Choose a presentation $\Rone = \kk[X_{1}, \ldots, X_{n}] / J$ of $\Rone$, where $J$ is a homogeneous prime ideal. This yields a closed embedding $\Proj(\Rone) \to \PP^{n-1}$, which we compose with the finite morphism induced by the inclusion $\Rone \to R$ (cf. \Cref{lem:gen-lin}(3)) to obtain a finite morphism $\phi \colon \Proj(R) \to \PP^{n-1}$. Let $H \subseteq \PP^{n-1}$ be a generic hyperplane defined by a linear from $\theta' \in \kk [ X_1, \ldots, X_n ]$. Then $\phi^{-1}(H)$ is defined by $\theta \coloneqq \phi^*(\theta')$ in $\Proj(R)$.
  
  \begin{asparaenum}
  \item If $R$ is reduced, then $\Proj(R)$ is reduced, and thus \Cref{thm:normal-Bertini-hyperplane} implies that $\phi^{-1}(H) = \Proj(R/\<\theta\>)$ is reduced as well. Using \Cref{lem:reduced}, we conclude that $S$ is reduced.
  \item By (1), $S$ is reduced, and thus $\< \theta \> \colon R_+^\infty$ is a radical ideal. Since $\dim R \geq 2$, we have that $\sqrt{\<\theta\>} \neq R_+$ which implies that $\<\theta\> : R_+^\infty \subseteq R_+$ (the saturation $\<\theta\> : R_+^\infty$ is homogeneous). Thus $\< \theta \> \colon R_+^\infty \subseteq \sqrt{\<\theta\>}$ and therefore it follows that $\<\theta\> \colon R_+^\infty = \sqrt{\<\theta \>}$.
    
  As $\phi$ is finite, we obtain that
  \[
    \dim \phi (\Proj(R)) = \dim \Proj(R) = \dim \Proj \rleft( \Rone \rright) = \dim \Rone -1 = \dim R - 1 \geq 2 \text{.} 
  \]
  Hence, we may apply \Cref{thm:irred-Bertini} for $d=1$ to conclude that $\phi^{-1}(H)$ is irreducible. Thus, $\sqrt{\<\theta\>} = \<\theta\>\colon R_+^\infty$ is prime.  
  
  \item If $\dim R \leq 1$, then $S$ is trivially Cohen-Macaulay. Otherwise, if $\dim R \geq 2$ and $R$ is Cohen-Macaulay, then $R_+$ is not an associated prime ideal of $R$. Hence, by \Cref{lem:gen-lin}, a generic $\theta \in R_1$ is not contained in any associated prime, and thus a nonzerodivisor. It follows that $S' := R/\<\theta\>$ is Cohen-Macaulay (see \cite[Theorem 2.1.3]{BH}). Moreover, $\dim S' \geq 1$, and hence this ring has positive depth. In particular, $(0 : (S')_+^\infty) = 0$, and thus
    \[
      R / \<\theta\> = S' = S' /(0 : (S')_+^\infty)  = R / ( \<\theta\> : R_+^\infty) = S \text{.}\qedhere
    \]
  \end{asparaenum}
\end{proof}

\begin{prop}
  \label{thm:Bertini-spanning}
  For a virtually standard ring $R$, the following rings are also virtually standard:
  \begin{enumerate}
  \item $R/ \<\theta \>$ for any nonzerodivisor $\theta \in R_1$, and
  \item $R / (0 : R_+^\infty)$. 
  \end{enumerate}
\end{prop}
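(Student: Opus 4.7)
The overall strategy is to combine Hilbert-series analysis with the short exact sequence of $\Rone$-modules
\[
  0 \to \Rone \to R \to N \to 0, \qquad N \coloneqq R/\Rone.
\]
Virtual standardness of $R$ is equivalent to $\dim N < d \coloneqq \dim R$, so that $e(R) = e(\Rone)$. For each part, I identify the degree-one subring of the quotient as the image of $\Rone$, and use this sequence to compare multiplicities.

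Part (2) is the easier case. The ideal $J \coloneqq 0 :_R R_+^\infty$ has finite length, hence $\dim J = 0 < d$, so $e(R/J) = e(R)$. The subring $(R/J)^{(1)}$ coincides with the image of $\Rone$ in $R/J$, which is $\Rone/(\Rone \cap J)$. Using \Cref{lem:gen-lin}(1), which yields $R_+^k \subseteq \Ronep R$ for some $k$, one verifies that $\Rone \cap J = 0 :_{\Rone} \Ronep^\infty$, again of finite length. Therefore $e((R/J)^{(1)}) = e(\Rone) = e(R) = e(R/J)$, proving $R/J$ is virtually standard.

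For part (1), set $S \coloneqq R/\langle \theta \rangle$ and $T \coloneqq S^{(1)}$. Semistandardness of $S$ is immediate since $S$ is integral over the image of $\Rone$, which coincides with $T$. As $\theta$ is a nonzerodivisor of degree one on $R$ (and hence on the subring $\Rone$), one has $e(S) = e(R)$ and $e(\Rone/\theta\Rone) = e(\Rone) = e(R)$. The snake lemma applied to multiplication by $\theta$ on the defining sequence of $N$ yields the four-term exact sequence
\[
  0 \to M(-1) \to \Rone/\theta\Rone \to S \to N/\theta N \to 0,
\]
where $M \coloneqq \ker(\theta : N \to N)$ is the $\theta$-torsion submodule of $N$. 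Factoring the middle map through its image $T$ gives the short exact sequence
\[
  0 \to M(-1) \to \Rone/\theta\Rone \to T \to 0,
\]
so $e(T) = e(\Rone/\theta\Rone) - e(M) = e(R) - e(M)$ if $\dim M = d-1$, and $e(T) = e(R)$ otherwise.

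The main obstacle is therefore to establish $\dim M < d - 1$. Since $M \subseteq N$ and virtual standardness gives $\dim N \leq d - 1$, the case $\dim N \leq d - 2$ is immediate. In the remaining case $\dim N = d - 1$, one must show that no top-dimensional associated prime of $N$ contains $\theta$; the intended input is the nonzerodivisor hypothesis on $\theta$ together with the generically rank-one structure of $R$ over $\Rone$, which can be accessed through the birational criterion of \Cref{prop:spanning}. Once $\dim M < d - 1$ is secured, the short exact sequence above gives $e(T) = e(R) = e(S)$, so $S$ is virtually standard, completing the proof.
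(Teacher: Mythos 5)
Part (2) matches the paper's proof: both arguments note that $J := 0 : R_+^\infty$ has finite length, identify $(R/J)^{(1)}$ with $\Rone/(J\cap\Rone)$, and use that quotienting by the finite-length submodules $J$ and $J\cap\Rone$ leaves the multiplicities of $R$ and $\Rone$ unchanged. Your extra step re-deriving the finiteness of $J\cap\Rone$ via \Cref{lem:gen-lin}(1) is harmless but unnecessary, since it is already a subspace of the finite-dimensional $J$.

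Part (1) contains the real issue, and you have correctly located it. The paper's one-line computation $e(\Rone/\theta\Rone) = e(\Rone) = e(R) = e(R/\theta R)$ tacitly identifies $(R/\theta R)^{(1)}$ with $\Rone/\theta\Rone$; your snake-lemma argument makes the missing step explicit, showing that $(R/\theta R)^{(1)}$ is the quotient of $\Rone/\theta\Rone$ by $M(-1)$, where $M$ is the $\theta$-torsion of $N := R/\Rone$, so that one needs $\dim M < d-1$. You flag this as unproven, and the inputs you gesture at do not deliver it: being a nonzerodivisor on $R$ says nothing about $\mathrm{Ass}(N)$, and \Cref{prop:spanning} applies only to domains, where \emph{every} nonzero $\theta\in R_1$ is a nonzerodivisor, so the hypothesis carries no extra force. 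In fact $\dim M < d-1$ can fail for a specific nonzerodivisor: take $\Rone = \kk[s^3,s^2t,t^3] \cong \kk[x,y,z]/\langle y^3 - x^2 z\rangle$ (generators in degree $1$) and $R = \Rone[st^5]$ with $st^5$ in degree $2$. Then $R$ is a virtually standard domain with $N \cong (\Rone/\langle x,y\rangle)(-2)$, and the nonzerodivisor $\theta = x = s^3$ lies in the one-dimensional associated prime $\langle x,y\rangle$ of $N$; one computes $(R/\theta R)^{(1)} \cong \kk[y,z]/\langle y^3, y^2 z\rangle$ of multiplicity $2$, while $e(R/\theta R) = e(R) = 3$. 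So the gap you found is genuine: part (1) as stated needs the extra hypothesis that $\theta$ also avoids the associated primes of $R/\Rone$, which holds for a generic linear form and is all that is used in \Cref{main-1}. The paper's own proof elides this same step, so your analysis has surfaced a point that merits a correction or remark at the source.
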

\begin{proof}
  \begin{asparaenum}
  \item Since $\theta$ is a nonzerodivisor on both $R$ and $\Rone$, it holds that $H_{R / \<\theta\>}(t) = (1-t)H_R(t)$, and similarly for $\Rone/\<\theta\>$. Hence $e(\Rone/\<\theta\>) = e(\Rone) = e(R) = e(R / \<\theta\>)$.

  \item Let $T := (0 : R_+^\infty)$. Consider the map $\phi \colon \Rone \to R / T$. Its kernel is $\Rone \cap T$, its image is $(R/T)^{(1)}$, and hence it holds that $(R/T)^{(1)} \cong \Rone / (T \cap \Rone)$.
  
    Now, $T$ is a finite-dimensional $\kk$-vector space (because it is a torsion submodule of $R$), and (thus) so is $T \cap \Rone$. In particular $H_R(t)$ and $H_{R/T}(t)$ (resp.~$H_\Rone(t)$ and $H_{\Rone/(T\cap\Rone)}(t)$) differ only for a finite number of terms. It follows that
    \[
      e(R/T) = e(R) = e(\Rone) = e(\Rone / (T \cap \Rone)) = e((R/T)^{(1)}) \text{.}\qedhere
    \]
  \end{asparaenum}
\end{proof}

Finally, let us give an application of Bertini's theorem. In the standard case, it is a classical result that the multiplicity of a (reduced) projective variety equals the number of points in the intersection with a generic linear space of complementary dimension (see, for instance, \cite[p.~171]{GH}). The same holds in the semistandard setting, and since we could not locate a reference for it, we sketch a proof.

\begin{prop}
  \label{prop:deg}
  Let $R$ be a reduced semistandard algebra over an algebraically closed field $\kk$ of characteristic $0$ with $d \coloneqq \dim R$ and consider $S = R/ \rleft( \<\theta_1, \dotsc, \theta_{d-1}\> : R_+^\infty\rright)$ for generic linear forms $\theta_1, \dotsc, \theta_{d-1} \in R_1$. Then, $\Proj(S)$ is the union of exactly $e(R)$ reduced points.
\end{prop}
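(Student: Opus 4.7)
The strategy is to iterate \Cref{thm:Bertini}(1) down to dimension one, to track the multiplicity so that $e(S) = e(R)$, and then to identify the points of $\Proj(S)$.

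Setting $R_0 := R$, I would recursively choose generic linear forms $\theta_i \in (R_{i-1})_1$ for $i = 1, \dotsc, d-1$ and define
\[
  R_i := R_{i-1} / \rleft( \<\theta_i\> : (R_{i-1})_+^\infty \rright).
\]
By \Cref{thm:Bertini}(1) every $R_i$ is semistandard and reduced with $\dim R_i = d - i$, so $S := R_{d-1}$ is a semistandard reduced $\kk$-algebra of Krull dimension $1$; in particular, $\Proj(S)$ is a reduced $0$-dimensional projective $\kk$-scheme, hence a disjoint union of finitely many reduced closed points (each with residue field $\kk$ by algebraic closedness).

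Next I would verify $e(R_i) = e(R_{i-1})$ at every step. Since $R_{i-1}$ is reduced of Krull dimension $d - i + 1 \geq 2$, its associated primes are exactly its minimal primes (all of height $0$), so $(R_{i-1})_+$ is not associated. By \Cref{lem:gen-lin}(2), a generic $\theta_i$ avoids every associated prime and is therefore a nonzerodivisor, giving
\[
  H_{R_{i-1}/\<\theta_i\>}(t) = (1-t)\,H_{R_{i-1}}(t),
\]
which preserves the value of the Hilbert-series numerator at $t = 1$. Passing to the saturation only removes a finite-dimensional torsion submodule, so $H_{R_i}(t)$ and $H_{R_{i-1}/\<\theta_i\>}(t)$ differ by a polynomial; since $\dim R_i = d - i \geq 1$, this correction does not affect the multiplicity (cf.~the remark after \Cref{def:semistandard}). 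By induction, $e(S) = e(R)$.

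Finally, let $\pp_1, \dotsc, \pp_r$ be the minimal primes of $S$. Since $S$ is reduced of dimension $1$, the injection $S \hookrightarrow \prod_i S/\pp_i$ has finite-dimensional cokernel, so $h_S(n) = \sum_i h_{S/\pp_i}(n)$ for $n \gg 0$. Each $S/\pp_i$ is a $1$-dimensional semistandard graded domain over $\bar\kk$; the homogeneous localization of such a ring at all nonzero homogeneous elements is a graded field of the form $\kk[t, t^{-1}]$ with $\deg t = 1$, so $\dim_\kk (S/\pp_i)_n \leq 1$ for all $n$ and equals $1$ for $n \gg 0$. Hence $h_S(n) = r$ for $n \gg 0$, which combined with $h_S(n) = e(S)$ for $n \gg 0$ yields $r = e(S) = e(R)$. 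The main delicate point of the proof is the multiplicity bookkeeping under saturation by the irrelevant ideal; the rest follows from the already-established Bertini theorem for semistandard rings and standard Hilbert-series computations.
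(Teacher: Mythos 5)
Your proposal is correct and follows the same overall strategy as the paper's proof: iterate the semistandard Bertini theorem down to a one-dimensional reduced ring, track multiplicity via Hilbert series, and identify the points with the minimal primes of $S$. The only meaningful deviations are in the bookkeeping. In the multiplicity step you use reducedness of each $R_{i-1}$ to conclude that a generic $\theta_i$ is a nonzerodivisor, whereas the paper instead invokes the general fact that for a generic linear form the annihilator $(0 : \langle\theta\rangle)$ has finite length and reads off $e(R) = e(R/\langle\theta\rangle)$ from the resulting exact sequence; your route is slightly shorter, given that reducedness of the intermediate quotients is already available from \Cref{thm:Bertini}. In the final count you assert that the injection $S \hookrightarrow \prod_i S/\pp_i$ has finite-dimensional cokernel; this is true (the cokernel is supported on the pairwise intersections $V(\pp_i + \pp_j)$, which have dimension $< 1$, hence length zero), but you state it without justification, and the paper avoids needing it by proving the lower bound $h_S(\ell) \geq r$ directly, using prime avoidance to exhibit $r$ linearly independent homogeneous ``indicator functions.'' You should supply the one-line support argument for the cokernel, or replace it with the paper's explicit construction. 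Finally, the parenthetical cross-reference should point to the paragraph on multiplicity (the observation that altering finitely many terms of a Hilbert series does not change the multiplicity), not to the remark after \Cref{def:semistandard}.
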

\begin{proof}
  We show that $e(R) = e(S)$. Set $R' \coloneqq R/ \rleft( 0 : \<\theta_1\> \rright)$ and consider the following exact sequence of graded $R$-modules where $R'[-1]$ denotes the graded module $R'$ with its grading shifted by $1$ to the left
  \[
    0 \too R'[-1] \overset{\theta_1}{\too} R \too R/ \<\theta_1\> \too 0\text{.}
  \]
  If $\theta \in R_1$ is generic, the elements annihilated by $\theta$ form a submodule of finite length (cf. \cite[Lemma 4.3.1]{HH}), and thus $H_{R'[-1]}(t) = tH_R(t) - p(t)$ for some polynomial $p(t) \in \ZZ_{\ge0}[t]$. With the exact sequence above, it follows that $H_{R/\<\theta_1\>}(t) = (1-t)H_R(t) + p(t)$. As $p(t)$ changes only finitely many terms of $H_R$, we obtain $e(R) = e(R/\<\theta_1\>)$. Similarly, saturation amounts to dividing out a torsion submodule, which has finite length, and thus it does not change the multiplicity. By induction, it follows that $e(R) = e(S)$.

  Note that $S$ is reduced by \Cref{thm:Bertini}, and thus it suffices to show that if $S$ is one-dimensional and reduced, then $\Proj(S)$ consists of $e(S)$ (reduced) points. Let $\pp_1, \dotsc, \pp_r \subset S$ denote the minimal prime ideals in $S$, so that $\Proj(S) = \set{ \pp_1, \ldots, \pp_r}$. Consider the natural homomorphism $\varphi \colon S \to S/\pp_1 \times \dotsm \times S/\pp_r \eqqcolon S'$. It is injective as $S$ is reduced. Moreover, each $S / \pp_i$ is a semistandard domain of dimension one, and thus isomorphic to a univariate polynomial ring $\kk[x_i]$ (cf. \cite[Proposition 3.1]{Stanley:CMDom}). For each degree $\ell\ge0$, $h_S(\ell) \leq h_{S'}(\ell) = r \cdot h_{\kk[x]}(\ell) = r = \#\Proj(S)$. As the Hilbert polynomial $P_S$ is a constant polynomial, it follows $P_S = e(S) \leq \#\Proj(S)$.  

  On the other hand, $(\bigcap_{i\neq j}\pp_j) \setminus \pp_i$ is not empty for $i = 1, \ldots, r$.  As we are dealing with homogeneous prime ideals, there is a homogeneous element in $\bigcap_{i\neq j} \pp_j$ which does not lie in $\pp_i$. Such elements give rise to functions vanishing on all $\pp_j$ but not on $\pp_i$ (see \Cref{ex:intpn-fct}). Hence $h_S(\ell) \geq r$ for $\ell \gg 0$, and thus $P_S = e(S) = r = \#\Proj(S)$.
\end{proof}

\begin{rem}
  \label{ex:intpn-fct}
  As this situation will also be of importance in the next section, let us recall here some basic facts in the case that $R$ is reduced with $\dim R = 1$. Furthermore, assume that $\kk$ is algebraically closed. Denote by $\pp_1, \ldots, \pp_r$ the minimal prime ideals of $R$ so that $\Gamma \coloneqq \Proj(R) = \set{ \pp_1, \ldots, \pp_r }$. Note that $\Gamma$ consists only of closed points. Fix a linear form $\theta \in R_1$ not contained in any $\pp_i$ (see \Cref{lem:gen-lin}). We note that $R_{(\pp_i)} = \kk$ for $i = 1, \ldots, r$ and that the homogeneous elements of degree $\ell \in \NN$ can be embedded in the ring of global regular functions on $\Gamma$ as follows: To $f \in R_\ell$ we associate the regular function $f/\theta^\ell \colon \Proj(R) \to \bigsqcup_{i=1}^r R_{(\pp_i)}$ and observe that if $f,g \in R_\ell$ are two homogeneous elements with the same regular function, then $f-g \in \bigcap_{i=1}^r \pp_i = \sqrt{0} = 0$.
\end{rem}

\section{The Uniform Position Principle for virtually standard curves}
\label{sec:UPP}
In this section, we will prove \Cref{main-2}. It will follow from combining \Cref{prop:upp} with \Cref{cor35}.

As a short-hand notation, for $n\in \NN$ and $a_1, \dotsc, a_r > 1$, we write
\[
  \PPwt \coloneqq \PP(\underbrace{1,\dotsc,1}_{n \text{ times}}, a_1, \dotsc, a_r) = \Proj(S) 
\]
for the \Defn{weighted projective space} with its homogeneous coordinate ring given by the polynomial ring $S = \kk[X_1,\ldots, X_n,Y_1, \ldots, Y_r]$ where $\deg X_i = 1$ and $\deg Y_j=a_j$. Note that $S^{(1)} = \kk[X_1, \ldots, X_n]$, so $\Proj(S^{(1)}) = \PP^{n-1}$. We refer to \cite{Dolgachev} for more details about weighted projective spaces.  Throughout this section, we assume $\kk = \CC$.

\subsection{Definition and results}

By a {\em curve} $C \subseteq \PPwt$, we denote an irreducible reduced closed subvariety in $\PPwt$ of dimension $1$. We say that $C \subseteq \PPwt$ is \Defn{virtually standard} if its homogeneous coordinate ring is virtually standard. In particular, by \Cref{prop:spanning}, $C \subseteq \PPwt$ is virtually standard if and only if the projection $\PPwt \dashrightarrow \PP^{n-1}$ is a birational morphism on $C$.

\begin{defn}
  \label{def:upp}
  A finite set of (closed, reduced) points $\Gamma$ in a weighted projective space \Defn{lies in uniform position} if any two subsets $\Gamma_1, \Gamma_2 \subseteq \Gamma$ with the same cardinality have the same Hilbert function.
\end{defn}

\begin{rem}
  In \cite{HarGenus}, Harris defines the uniform position property by counting conditions that subsets of $\Gamma$ impose on forms of a given degree.  This is equivalent to our definition as follows:
  \[
      \text{$\Gamma$ imposes $t$ conditions in degree $\ell$} \iff \dim_\kk (S/I(\Gamma))_\ell = t \iff h_\Gamma(\ell) = t \text{.}
  \]
\end{rem}

The following result extends Harris' \emph{Uniform Position Principle} \cite{HarGenus}.

\begin{thm}
  \label{prop:upp}
  For a virtually standard curve $C \subseteq \PPwt$, a generic hyperplane section $C \cap H$ lies in uniform position.
\end{thm}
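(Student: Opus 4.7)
The plan is to reduce the statement to Harris' classical Uniform Position Principle for curves in ordinary projective space, using the birational projection built into the virtually standard hypothesis as the bridge. First, I would set up the diagram: write $\pi \colon \PPwt \dashrightarrow \PP^{n-1}$ for the projection onto the weight-one coordinates, so that by \Cref{prop:spanning}, the restriction $\pi|_C \colon C \dashrightarrow C'$, with $C' \coloneqq \overline{\pi(C)} \subseteq \PP^{n-1}$, is birational. Consequently $C'$ is a reduced, irreducible curve of the same degree $d$ as $C$. I would let $E \subsetneq C$ denote the proper closed subset where $\pi|_C$ fails to be an isomorphism onto its image, and let $F' \subsetneq C'$ denote the analogous finite set on the target side.

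Next, I would match up hyperplane sections. A hyperplane $H \subseteq \PPwt$ is cut out by a degree-one form $\theta \in \kk[X_1, \dotsc, X_n]$, and this same $\theta$ also cuts out a hyperplane $H' \subseteq \PP^{n-1}$. By \Cref{thm:Bertini}(1) applied to the coordinate ring of $C$, a generic $\theta$ produces a reduced set $\Gamma \coloneqq C \cap H$ of exactly $d$ points. After restricting $\theta$ further to avoid the finitely many hyperplanes meeting $E \cup \pi^{-1}(F')$, the birational isomorphism $C \setminus E \xrightarrow{\sim} C' \setminus F'$ restricts to a bijection $\psi_H \colon \Gamma \xrightarrow{\sim} \Gamma' \coloneqq C' \cap H'$ over the remaining open parameter set $U$ of admissible hyperplanes.

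I would then invoke Harris' original Uniform Position Principle for the curve $C' \subseteq \PP^{n-1}$: the monodromy group of the étale cover $\{(p', H') \with p' \in C' \cap H'\} \to U$ is the full symmetric group $S_d$. Transporting along the continuously varying family of bijections $\psi_H$, the monodromy group of the corresponding étale cover on the weighted side is also $S_d$. The Hilbert function of a subset $\Gamma_1 \subseteq \Gamma$ of cardinality $k$, computed in the ambient coordinate ring of $\PPwt$, is an algebraic invariant of the reduced configuration and therefore upper semicontinuous in $H$, hence generically constant along each connected component of $U$. Since $S_d$ acts transitively on unordered $k$-subsets for every $k \le d$, any two $k$-subsets of $\Gamma$ lie in the same monodromy orbit and share a common generic Hilbert function, which is precisely uniform position.

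The hard part will be the rigorous transport of monodromy across the birational correspondence: one has to check that the loci excluded from $U$ (hyperplanes meeting the exceptional sets on either side) are proper closed subsets, and that the two étale covers of $U$ are canonically identified by the family of bijections $\psi_H$. This is exactly where virtual standardness is indispensable, since without it a generic fiber on the weighted side would be strictly larger than $\Gamma'$, no identification would be possible, and the theorem itself would fail, as in \Cref{ex:nonUPP}.
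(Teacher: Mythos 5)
Your proposal is correct in outline and follows essentially the same strategy as the paper's: both transfer the uniform position property from the birational image $C' \subseteq \PP^{n-1}$ back to $C$ using virtual standardness. The paper packages the transfer as the irreducibility of the incidence correspondences $I_t = \set{(H,p_1,\dotsc,p_t) \with p_i \in C \cap H \text{ distinct}}$ (proved in \Cref{lem:inccor} by patching $I_t$ birationally onto the classical $I'_t$), and then combines this with the closedness of the Hilbert-function locus (\Cref{lem:minor}). Your monodromy formulation is an equivalent repackaging of the same geometric input, so the two routes are really one.

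Two points deserve more care before this is a complete argument. First, you invoke that the monodromy of the cover $\set{(p',H')} \to U$ is the \emph{full symmetric group} $S_d$. That is Harris' uniform position \emph{lemma}, which as usually stated requires $C' \subseteq \PP^{n-1}$ to be nondegenerate; you have not checked that, and it need not hold if the presentation of $\Rone$ is not minimal. Fortunately the full $S_d$ is more than you use: for uniform position you only need transitivity on unordered $k$-subsets for each $k$, which follows from irreducibility of each $I'_t$ (equivalently, transitivity on ordered $t$-tuples for every $t$), and that holds for any irreducible curve via Fulton--Lazarsfeld; this weaker and more robust statement is precisely what the paper cites from \cite[p.~197]{HarGenus}. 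Second, your appeal to ``upper semicontinuity of the Hilbert function in $H$'' is the one genuinely new thing that needs proof in the weighted setting. In $\PPwt$ one cannot naively evaluate a degree-$\ell$ form at a point; one must trivialize over an affine chart $U_{H_0}$ avoiding the locus $Z$ where all weight-one coordinates vanish, write elements of degree $\ell$ as $f/\theta_0^\ell$, and realize the Hilbert function as the rank of an evaluation matrix. This is exactly the content of \Cref{lem:minor}, which you are implicitly using and would need to supply (note also that $Z \cap C = \emptyset$ because the coordinate ring of $C$ is semistandard, which is what makes the chart argument work).
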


\begin{ex}\label{ex:nonUPP}
  The assumption that $C \subseteq \PPwt$ is virtually standard cannot be dropped in \Cref{prop:upp}.  Consider a (reduced irreducible) curve $C \subseteq \PP(1,1,1,2)$ of degree $2d$, where the projection to $\PP^2$ is generically two-to-one.  Let $H$ be a generic hyperplane and $\Gamma \coloneqq C \cap H$.
	
  For any two distinct points $p_1, p_2 \in \Gamma$ with $\pi(p_1) = \pi(p_2)$, any hyperplane $H'$ passing through $p_1$ also contains $p_2$. Thus $h_{\set{p_1, p_2}}(1) = 1$.  On the other hand, if $\pi(p_1) \neq \pi(p_2)$, then one can find two linear forms vanishing on exactly one of the two points, and thus $h_{\set{p_1, p_2}}(1) = 2$.  Thus, $\Gamma$ does not lie in uniform position.
	
  An explicit example with this phenomenon is given by $C = V(xy+yz-z^2,x^2yz-a^2) \subseteq \PP(1,1,1,2)$, where $x,y,z$ have degree $1$ and $a$ has degree $2$.  Note that each point $(x,y,z)$ in the projection of $C$ to $\PP^2$ with $xyz\neq 0$ has exactly two preimages, and thus the projection $C \to C'$ is generically two-to-one. Using \texttt{Macaulay2}, one can compute that $\deg C = 4$ while $\deg C' =2$.
\end{ex}

Here is the desired application of the Uniform Position Principle on the Hilbert function. This proposition is an extension of \cite[Corollary 3.5]{Har} to the weighted setting.

\begin{prop}
  \label{cor35}
  Let $\Gamma \subseteq \PPwt$ be a finite set of closed reduced points in uniform position. Then, for any integers $i,j$, one has $h_\Gamma(i+j) \geq \min(\#\Gamma, h_\Gamma(i) + h_\Gamma(j) - 1)$.
\end{prop}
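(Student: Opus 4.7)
The plan is to adapt the argument of \cite[Corollary 3.5]{Har} to the weighted setting, using \Cref{ex:intpn-fct} to make sense of evaluation of forms at points. It suffices to treat $i, j \geq 0$. I would fix a linear form $\theta \in S_1$ that does not vanish at any point of $\Gamma$ (which exists by \Cref{lem:gen-lin}(2)). Then the map $f \mapsto f/\theta^\ell$ identifies $S_\ell / I(\Gamma)_\ell$ with its image in $\kk^\Gamma$ under evaluation at the points of $\Gamma$; for every subset $\Gamma' \subseteq \Gamma$, the value $h_{\Gamma'}(\ell)$ equals the dimension of the image of $S_\ell \to \kk^{\Gamma'}$.

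The first step is to prove a \emph{uniform position lemma}: if $n := h_\Gamma(\ell)$ and $\Gamma' \subseteq \Gamma$ has $|\Gamma'| \leq n$, then $h_{\Gamma'}(\ell) = |\Gamma'|$. The image of $S_\ell$ in $\kk^\Gamma$ has dimension $n$, so some set of $n$ coordinates receives a surjective projection; hence \emph{some} $n$-element subset of $\Gamma$ imposes independent conditions in degree $\ell$, and so does each of its subsets. Uniform position then propagates the conclusion to every subset of $\Gamma$ of the prescribed cardinality.

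The main argument then runs as follows. Set $N = \#\Gamma$, $n = h_\Gamma(i)$, $m = h_\Gamma(j)$, and $M = \min(N, n+m-1)$, and pick any $\Gamma'' \subseteq \Gamma$ of size $M$. Since $h_\Gamma(i+j) \geq h_{\Gamma''}(i+j)$ by projection, it is enough to show $h_{\Gamma''}(i+j) = M$, i.e., that $\Gamma''$ imposes independent conditions in degree $i+j$. For each $p_0 \in \Gamma''$, I would partition $\Gamma'' \setminus \{p_0\}$ as $A \sqcup B$ with $|A| = n - 1$ and $|B| = M - n$; the inequalities $n \leq M$ and $M \leq n + m - 1$ guarantee $|A| \geq 0$ and $|B| \leq m - 1$. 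Applying the lemma to $A \cup \{p_0\}$ in degree $i$ yields $f \in S_i$ with $f|_A = 0$ and $f(p_0) \neq 0$; applied to $B \cup \{p_0\}$ in degree $j$, it produces $g \in S_j$ with $g|_B = 0$ and $g(p_0) \neq 0$. Then $F := fg \in S_{i+j}$ vanishes on $\Gamma'' \setminus \{p_0\}$ but not at $p_0$, giving the required separation.

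The main obstacle is twofold: setting up point evaluation consistently in the weighted setting (resolved by fixing a single linear form $\theta$ as in \Cref{ex:intpn-fct}), and choosing the cardinality $M = \min(N, n+m-1)$ and the partition $A \sqcup B$ so that both cases $n+m-1 \leq N$ and $n+m-1 > N$ are handled simultaneously. Once these bookkeeping matters are settled, the product construction $F = fg$ is exactly Harris' original trick.
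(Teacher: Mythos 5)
Your proposal is correct and follows essentially the same route as the paper: reduce to showing that a subset $\Gamma'$ of size $\min(\#\Gamma, h_\Gamma(i)+h_\Gamma(j)-1)$ imposes independent conditions in degree $i+j$, split $\Gamma'$ around each point $p$ into two pieces of sizes at most $h_\Gamma(i)$ and $h_\Gamma(j)$, use uniform position (via the characterization that subsets of size at most $h_\Gamma(\ell)$ impose independent conditions in degree $\ell$, which is the paper's \Cref{lem:charUPP} combined with \Cref{lem:linal}) to produce forms $f$ and $g$ separating $p$ from each piece, and take the product $fg$. Your preliminary "uniform position lemma" is exactly the content of \Cref{lem:charUPP}, and fixing a single linear form $\theta$ to identify forms with functions on $\Gamma$ is exactly the device of \Cref{ex:intpn-fct}.
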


\subsection{Functions on finite sets of points}

In this section, we prove \Cref{cor35}.

The following auxiliary result follows from elementary linear algebra. We omit its straightforward proof.
\begin{lem}
  \label{lem:linal}
  Let $M$ be a finite set and write $\kk^M \coloneqq \set{f \colon M \to \kk}$ for the $\kk$-vector space of maps from $M$ to $\kk$. Fix a linear subspace $V \subseteq \kk^M$ and denote the space of restricted functions by $V|_N \coloneqq \set{f|_N \with f \in V}$ where $N \subseteq M$. Then the following holds:
  \begin{enumerate}
  \item $\dim_\kk V|_N \leq \#N$ for any $N \subseteq M$.
  \item There exists $N \subseteq M$ with $\#N = \dim_\kk V$ such that the natural map $V \to V|_N$ is an isomorphism.
  \item For $N \subseteq M$ with $\dim_\kk V|_N = \#N$ and any $N' \subseteq N$, it holds that  $\dim_\kk V|_{N'} = \#N'$.
  \item For $N \subseteq M$ with $\dim_\kk V|_N = \#N$ and any $p \in N$, there exists a function $f \in V$ with $f(p) \neq 0$ which vanishes on $N \setminus\set{p}$.
  \end{enumerate}
\end{lem}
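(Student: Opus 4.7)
The plan is to prove the four parts in order, with part (2) carrying the substantive content (via the equality of row and column rank) and the remaining parts following as bookkeeping. Part (1) is immediate: $V|_N$ sits inside $\kk^N$ as a linear subspace, and $\dim_\kk \kk^N = \#N$.

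For part (2), the natural approach is to pick a basis $f_1, \ldots, f_r$ of $V$ with $r = \dim_\kk V$ and form the $r \times \#M$ matrix $A = \bigl(f_i(p)\bigr)_{1 \le i \le r,\, p \in M}$. Since the $f_i$ are linearly independent as elements of $\kk^M$, the rows of $A$ are linearly independent, so $A$ has rank $r$. Equality of row and column rank yields $r$ columns of $A$ that are linearly independent; let $N \subseteq M$ be the corresponding set of $r$ points. The restriction map $V \to V|_N$ is then injective, since any nonzero kernel element $\sum c_i f_i$ would produce a linear dependence among those very columns. By part (1), $\dim_\kk V|_N \le r = \dim_\kk V$, and combined with injectivity this forces the map to be an isomorphism, so also $\#N = r = \dim_\kk V$.

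For part (3), the hypothesis $\dim_\kk V|_N = \#N$ combined with (1) forces $V|_N = \kk^N$, i.e., every function on $N$ extends to an element of $V$. Further restriction $\kk^N \to \kk^{N'}$ is surjective for any $N' \subseteq N$, so the composition $V \to V|_{N'}$ is surjective, giving $V|_{N'} = \kk^{N'}$ of dimension $\#N'$. Part (4) follows from the same observation: since $V|_N = \kk^N$, one can simply choose $f \in V$ whose restriction to $N$ is the indicator function of $\{p\}$, and such an $f$ satisfies $f(p) = 1 \neq 0$ and vanishes on $N \setminus \{p\}$.

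The only ``obstacle'' worth naming is in part (2), and even it amounts to little more than invoking equality of row and column rank for the evaluation matrix $A$; the lemma is genuinely elementary, and the other three parts are formal consequences of the surjection $V \twoheadrightarrow \kk^N$ that part (2) produces.
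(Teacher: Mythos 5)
Your proof is correct and is exactly the standard elementary argument; the paper in fact omits the proof of this lemma entirely, calling it straightforward, so there is nothing to diverge from. One tiny imprecision in part (2): a nonzero kernel element $\sum c_i f_i$ vanishing on $N$ gives a linear dependence among the \emph{rows} of the $r \times r$ submatrix of $A$ indexed by $N$, not among its columns, but since that square submatrix has full column rank (hence full row rank), the contradiction and the rest of your argument go through unchanged.
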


Let $\Gamma \subseteq \PPwt$ be a finite set of (closed, reduced) points and write $R$ for its homogeneous coordinate ring. We can interpret the homogeneous elements of $R$ as functions on $\Gamma$ (see  \Cref{ex:intpn-fct}). So, for the homogeneous coordinate ring $R'$ of any subset $\Gamma' \subseteq \Gamma$, we have that the projection $R \to R'$ corresponds to the restriction of regular functions on $\Gamma$ to $\Gamma'$. Using this identification, the following is immediate from \Cref{lem:linal}:
\begin{lem}
  \label{lem:points}
  Let $\Gamma \subseteq \PPwt$ be a finite set of (closed, reduced) points, and let $\ell \in \NN$. Then the following holds:
  \begin{enumerate}
  \item $h_{\Gamma'}(\ell) \leq \#\Gamma'$ for any $\Gamma' \subseteq \Gamma$.
  \item There exists $\Gamma' \subseteq \Gamma$ with $\#\Gamma' = h_{\Gamma}(\ell) = h_{\Gamma'}(\ell)$.
  \item Let $\Gamma' \subseteq \Gamma$ such that $h_{\Gamma'}(\ell) = \#\Gamma'$. Then for any $\Gamma'' \subseteq \Gamma'$, it holds that  $h_{\Gamma''}(\ell) = \#\Gamma''$.
  \end{enumerate}
\end{lem}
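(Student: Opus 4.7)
The plan is to reduce the three claims to the correspondingly numbered parts of \Cref{lem:linal} by realizing $R_\ell$ as a space of functions on $\Gamma$, using exactly the construction sketched in \Cref{ex:intpn-fct}.

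Write $R$ for the homogeneous coordinate ring of $\Gamma$ and fix a linear form $\theta \in R_1$ that does not vanish at any point of $\Gamma$ (such a $\theta$ exists by \Cref{lem:gen-lin}(2), since the finitely many minimal primes $\pp_i$ corresponding to the points of $\Gamma$ are all different from $R_+$). As recalled in \Cref{ex:intpn-fct}, the assignment $f \mapsto f/\theta^\ell$ embeds $R_\ell$ injectively into $\kk^\Gamma$ as a $\kk$-subspace $V \subseteq \kk^\Gamma$; in particular $h_\Gamma(\ell) = \dim_\kk V$. For any subset $\Gamma' \subseteq \Gamma$ with homogeneous coordinate ring $R'$, the quotient map $R \twoheadrightarrow R'$ corresponds under this identification to the restriction $\kk^\Gamma \to \kk^{\Gamma'}$, so in degree $\ell$ we obtain $R'_\ell \cong V|_{\Gamma'}$ and hence $h_{\Gamma'}(\ell) = \dim_\kk V|_{\Gamma'}$.

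With this translation in place, each of the three assertions becomes an immediate instance of \Cref{lem:linal} applied with $M = \Gamma$ and the subspace $V$ just constructed. Statement (1) is \Cref{lem:linal}(1) with $N = \Gamma'$. For statement (2), \Cref{lem:linal}(2) provides a subset $N \subseteq \Gamma$ with $\#N = \dim_\kk V = h_\Gamma(\ell)$ on which $V \to V|_N$ is an isomorphism; taking $\Gamma' \coloneqq N$ yields $\#\Gamma' = h_\Gamma(\ell) = \dim_\kk V|_{\Gamma'} = h_{\Gamma'}(\ell)$. Statement (3) is \Cref{lem:linal}(3) with $N = \Gamma'$ and $N' = \Gamma''$.

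The only non-formal ingredient is the identification $h_{\Gamma'}(\ell) = \dim_\kk V|_{\Gamma'}$, which is really the main substance and the step where one must be careful about what the ``coordinate ring of $\Gamma'$'' is: it is the quotient of the ambient polynomial ring $S$ by the homogeneous ideal of $\Gamma'$, equivalently the quotient of $R$ by the kernel of restriction to $\Gamma'$. Once one observes that this quotient coincides in each degree with $V|_{\Gamma'}$, everything else is a direct application of the preceding linear-algebra lemma.
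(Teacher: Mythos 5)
Your proof is correct and follows exactly the paper's route: the paper likewise interprets $R_\ell$ as a space of functions on $\Gamma$ via the construction of \Cref{ex:intpn-fct}, identifies restriction to $\Gamma'$ with the quotient onto the coordinate ring of $\Gamma'$, and then invokes the corresponding parts of \Cref{lem:linal}. You spell out the identification $h_{\Gamma'}(\ell)=\dim_\kk V|_{\Gamma'}$ in more detail than the paper does, but the argument is the same.
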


Next, we give a folklore characterization of sets in uniform position.

\begin{prop}
  \label{lem:charUPP}
  For a finite set $\Gamma \subseteq \PPwt$ of closed reduced points, the following are equivalent:
  \begin{enumerate}
  \item $\Gamma$  lies in uniform position.
  \item For each subset $\Gamma' \subseteq \Gamma$, it holds that $h_{\Gamma'}(\cdot) =  \min(h_{\Gamma}(\cdot), \#\Gamma')$.
  \end{enumerate}
\end{prop}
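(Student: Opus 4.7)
The equivalence is a purely combinatorial statement about Hilbert functions of point sets, and the plan is to deduce it from the three bullet points of \Cref{lem:points} together with the definition of uniform position.

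The direction (2) $\Rightarrow$ (1) is immediate: if $\#\Gamma_1 = \#\Gamma_2$, then $\min(h_\Gamma(\ell), \#\Gamma_1) = \min(h_\Gamma(\ell), \#\Gamma_2)$ for every $\ell$, so the Hilbert functions of $\Gamma_1$ and $\Gamma_2$ agree. Hence the content lies in the converse.

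For (1) $\Rightarrow$ (2), fix $\ell \in \NN$ and $\Gamma' \subseteq \Gamma$. First I would establish the upper bound $h_{\Gamma'}(\ell) \leq \min(h_\Gamma(\ell), \#\Gamma')$: the bound by $\#\Gamma'$ is \Cref{lem:points}(1), while the bound by $h_\Gamma(\ell)$ follows because the restriction of regular functions induces a graded surjection from the homogeneous coordinate ring of $\Gamma$ onto that of $\Gamma'$ (compare \Cref{ex:intpn-fct}). For the reverse inequality I would split into two cases, using \Cref{lem:points}(2) to fix once and for all a subset $\Gamma_0 \subseteq \Gamma$ with $\#\Gamma_0 = h_\Gamma(\ell)$ and $h_{\Gamma_0}(\ell) = \#\Gamma_0$.

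If $\#\Gamma' \leq h_\Gamma(\ell)$, choose any $\Gamma_0' \subseteq \Gamma_0$ with $\#\Gamma_0' = \#\Gamma'$; \Cref{lem:points}(3) gives $h_{\Gamma_0'}(\ell) = \#\Gamma_0'$, and uniform position transfers this equality to $\Gamma'$, so $h_{\Gamma'}(\ell) = \#\Gamma' = \min(h_\Gamma(\ell), \#\Gamma')$. If instead $\#\Gamma' > h_\Gamma(\ell)$, pick $\Gamma_0' \subseteq \Gamma'$ with $\#\Gamma_0' = h_\Gamma(\ell)$; uniform position gives $h_{\Gamma_0'}(\ell) = h_{\Gamma_0}(\ell) = h_\Gamma(\ell)$, and the surjection of homogeneous coordinate rings of $\Gamma'$ onto that of $\Gamma_0'$ yields $h_{\Gamma'}(\ell) \geq h_{\Gamma_0'}(\ell) = h_\Gamma(\ell) = \min(h_\Gamma(\ell), \#\Gamma')$. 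Combining the two bounds in each case concludes the argument.

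I do not foresee any serious obstacle: the only thing to be careful about is keeping the case distinction between $\#\Gamma' \leq h_\Gamma(\ell)$ and $\#\Gamma' > h_\Gamma(\ell)$ straight, and noting that the subset witnessing maximality (\Cref{lem:points}(2)) must be chosen once as a reference point, while uniform position is then used to move the value of $h(\ell)$ between subsets of equal cardinality inside $\Gamma$.
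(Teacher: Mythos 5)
Your proof is correct and follows essentially the same route as the paper: both directions use the same ingredients (the trivial implication (2) $\Rightarrow$ (1), then Lemma (lem:points)(2) to produce a reference set $\Gamma_0$ of size $h_\Gamma(\ell)$, uniform position to transport Hilbert values between equal-sized subsets, Lemma (lem:points)(3) for the small-subset case, and the surjection of coordinate rings for the large-subset case). The only cosmetic difference is that you state the upper bound $h_{\Gamma'}(\ell)\leq\min(h_\Gamma(\ell),\#\Gamma')$ once at the outset and merge the paper's first and third cases into a single case $\#\Gamma'\leq h_\Gamma(\ell)$, whereas the paper splits into three cases and leaves the surjection bound implicit; the substance is the same.
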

\begin{proof}
  The implication (2) $\Rightarrow$ (1) is trivial. It remains to show (1) $\Rightarrow$ (2). Let $\Gamma' \subseteq \Gamma$ be a subset and $\ell \in \NN$. By \Cref{lem:points}(2), there exists a subset $\Gamma'_0 \subseteq \Gamma$ with $h_{\Gamma'_0}(\ell) = \# \Gamma'_0 = h_\Gamma(\ell)$. We distinguish three cases:
  \begin{itemize}
  \item $\#\Gamma' = h_\Gamma(\ell)$: As $\# \Gamma' = \# \Gamma'_0$, we have $h_{\Gamma'} = h_{\Gamma_0'}$, and thus $h_{\Gamma'}(\ell) = h_{\Gamma}(\ell) = \# \Gamma'$.
  \item $\#\Gamma' > h_\Gamma(\ell)$: For any $\Gamma'' \subseteq \Gamma'$ of size $h_\Gamma(\ell)$, we have $h_{\Gamma''} = h_{\Gamma'_0}$, and thus $h_\Gamma(\ell) \geq h_{\Gamma'}(\ell) \geq h_{\Gamma''}(\ell) = h_{\Gamma'_0}(\ell) = h_\Gamma(\ell)$.
  \item $\#\Gamma' < h_\Gamma(\ell)$: In this case $\Gamma'$ is contained in a set $\Gamma'' \subseteq \Gamma$ of size $h_\Gamma(\ell)$. The claim follows from $h_{\Gamma''} = h_{\Gamma'_0}$ and part (3) of \Cref{lem:points}. \qedhere
  \end{itemize}  
\end{proof}

Now, we can prove \Cref{cor35}.

\begin{proof}[Proof of \Cref{cor35}]
  Let $R$ be the homogeneous coordinate ring of $\Gamma$. As in \Cref{ex:intpn-fct}, we consider the homogeneous elements of $R$ as functions on $\Gamma$. Choose a subset $\Gamma' \subseteq \Gamma$ of size $\min(\#\Gamma, h_\Gamma(i) + h_\Gamma(j) - 1)$. Fix a point $p$ in $\Gamma'$ and choose two further subsets $V, W \subseteq \Gamma'$ such that $V \cap W = \set{p}$, $V \cup W = \Gamma'$, $\#V \leq h_{\Gamma}(i)$ and $\#W \leq h_{\Gamma}(j)$.
	
  Since $\Gamma$ lies in uniform position, by \Cref{lem:charUPP}, it holds that $h_V(i) = \#V$ and $h_W(j) = \#W$. Hence, part (4) of \Cref{lem:linal} implies that there is a function $r \in R_i$ of degree $i$ which is nonzero on $p$ and zero on all other points of $V$. Similarly, there is a function $g \in R_j$ of degree $j$ which vanishes on all points of $W$ except $p$. The product $fg \in R_{i+j}$ provides a function which vanishes on all points of $\Gamma'$ expect $p$.
  
  Since we can do this for each point of $\Gamma'$, and the resulting functions are by construction linearly independent, it follows that 
  \[
    h_\Gamma(i+j) \geq h_{\Gamma'}(i+j) \geq \#\Gamma' = \min(\#\Gamma, h_\Gamma(i) + h_\Gamma(j) - 1). \qedhere
  \]
\end{proof}

\subsection{Hyperplane sections of virtually standard curves}
In this section, we are going to prove \Cref{prop:upp}. Recall that the homogeneous coordinate ring of $\PPwt$ is denoted by $S$. Let us write $\PPwts$ for the set of hyperplanes in 
$\PPwt$. So, $\PPwts$ can be identified with $\PP(S_1)$, i.e., the space of linear forms on $\PPwt$ up to scaling. Moreover, the isomorphism $S_1 \cong S_1^{(1)}$ yields an identification of $\PPwts$ with $\PPs$, the set of hyperplanes in $\Proj( S^{(1)} ) = \PP^{n-1}$.

We prove \Cref{prop:upp} by reducing it to the unweighted case. The crucial step is the following lemma whose statement is a direct generalization of the corresponding result for curves in ordinary projective spaces (see \cite[p. 197]{HarGenus}).

\begin{lem}
  \label{lem:inccor}
  Let $C \subseteq \PPwt$ be a virtually standard curve. For each $t \in \NN$ with $1 \leq t \leq \deg C$, the incidence correspondence
  \[
    I_t \coloneqq \set{(H, p_1, \dotsc, p_t) \with H \in \PPwts, p_1, \dotsc, p_t \in C \cap H \text{ distinct}}
  \]
  is irreducible of dimension $\dim \Proj( S^{(1)} ) = n-1$.
\end{lem}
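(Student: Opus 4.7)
The plan is to reduce to the classical result of Harris (see \cite[p.~197]{HarGenus}) for curves in ordinary projective space and then transfer irreducibility via the birational projection induced by the virtually standard hypothesis. First, identify $\PPwts$ with $\PPs$ using the natural isomorphism $S_1 \cong S^{(1)}_1$, so that hyperplanes in $\PPwt$ correspond to hyperplanes in $\PP^{n-1} = \Proj(S^{(1)})$. Since $C$ is virtually standard, \Cref{prop:spanning} ensures that the rational map $\pi \colon \PPwt \dashrightarrow \PP^{n-1}$ restricts to a birational morphism from $C$ onto a reduced irreducible curve $C' \subseteq \PP^{n-1}$ with $\deg C' = \deg C$. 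We may further assume $C'$ is non-degenerate in $\PP^{n-1}$ (otherwise replace $\PP^{n-1}$ with the linear span of $C'$ and correspondingly shrink $\PPwt$). The classical Harris result then gives that the analogous incidence correspondence $I_t(C') \coloneqq \set{(H, q_1, \dotsc, q_t) \with H \in \PPs, q_1, \dotsc, q_t \in C' \cap H \text{ distinct}}$ is irreducible of dimension $n-1$.

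To transfer this to $I_t(C)$, I would use that $\pi|_C$ is an isomorphism away from a finite exceptional set $E \subseteq C$. By a Bertini-type argument (in the spirit of \Cref{thm:Bertini}), the set $W \subseteq \PPwts$ of hyperplanes $H$ for which $C \cap H$ consists of $\deg C$ reduced points, all lying outside $E$, is open and dense. For each $H \in W$, $\pi$ restricts to a bijection $C \cap H \to C' \cap H$, and the assignment $(H, p_1, \dotsc, p_t) \mapsto (H, \pi(p_1), \dotsc, \pi(p_t))$ defines an isomorphism of schemes $I_t(C)|_W \to I_t(C')|_W$ over $W$. Since $I_t(C')|_W$ is a non-empty open subset of the irreducible scheme $I_t(C')$, it is irreducible of dimension $n-1$, and hence so is $I_t(C)|_W$.

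To conclude, I would argue $I_t(C) = \overline{I_t(C)|_W}$ by pure-dimensionality. The incidence correspondence $I_t(C)$ is cut in $\PPwts \times C^t$ by $t$ incidence equations $p_i \in H$, so by Krull every component has dimension at least $(n-1) + t - t = n-1$; and since $C$ is non-degenerate, the projection $I_t(C) \to \PPwts$ has finite fibers, giving the reverse inequality. Any component of $I_t(C)$ supported over the proper closed subset $\PPwts \setminus W$ would have dimension at most $n - 2$, contradicting pure dimension $n-1$; thus no such component exists and $I_t(C) = \overline{I_t(C)|_W}$ is irreducible of dimension $n-1$. I expect the main obstacle to be verifying that $I_t(C)|_W \to I_t(C')|_W$ is an isomorphism of schemes (not merely a bijection on closed points), which requires $\pi$ to identify the scheme-theoretic hyperplane sections over $W$; this is where the semistandard Bertini theorems of \Cref{sec:spanning} play a crucial role.
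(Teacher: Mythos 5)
Your argument is correct and shares the paper's overall strategy: reduce to Harris's theorem for the incidence correspondence $I'_t$ of the standard part $C' = \Proj(\Rone) \subseteq \PP^{n-1}$, use the finite birational morphism $\pi \colon C \to C'$ supplied by the virtually standard hypothesis and \Cref{prop:spanning} to identify $I_t$ with $I'_t$ over a dense open locus of hyperplanes, and then show $I_t$ has pure dimension $n-1$ to conclude that no irreducible component can lie over the complementary small locus. The one genuine difference is the lower bound $\dim X_i \geq n-1$: you invoke Krull's principal ideal theorem applied to the $t$ incidence conditions inside $\PPwts \times C^t$, whereas the paper gives a local analytic argument (a small analytic neighborhood of $H$ in $\PPwts$ is covered by nearby fibers of $I_t$). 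Your algebraic route is cleaner and characteristic-free; the only thing to verify is that each incidence condition is genuinely a Cartier divisor in $\PPwts \times C^t$, which holds since the universal incidence hypersurface on $\PPs\times\PP^{n-1}$ pulls back under the finite map $\mathrm{id}\times\pi$ to a proper hypersurface (because, with a minimal presentation of $\Rone$, $C$ lies in no hyperplane). Likewise your conclusion ``no component lies over $\PPwts\setminus W$, so $I_t=\overline{I_t|_W}$ is irreducible'' is a mild streamlining of the paper's ``any two components have images dense in the irreducible $I'_t$, hence intersect on an open set.''

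Your final worry is misplaced: the scheme isomorphism $I_t(C)|_W \to I_t(C')|_W$ does \emph{not} need the semistandard Bertini theorems. It follows directly from $\pi$ restricting to an isomorphism $C\setminus E \to C'\setminus E'$ of complements of finite exceptional sets, together with the fact that $\pi$ preserves hyperplane membership (hyperplanes involve only degree-$1$ coordinates); this is precisely the paper's choice of open subsets $V$ and $V'$, cut out by demanding the hyperplane avoid both exceptional sets. Where the Bertini machinery does enter is in guaranteeing that $W$ (equivalently $I_t$) is non-empty: \Cref{prop:deg}, proved via \Cref{thm:Bertini}, gives that a generic hyperplane section of $C$ consists of $\deg C \geq t$ reduced points.
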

\begin{proof}
  First of all, note that by \Cref{prop:deg}, a generic hyperplane section of $C$ contains $\deg C \ge t$ points, which implies that $I_t$ is not empty.
	
  We claim that all irreducible components of $I_t$ have dimension $n-1$. Since the projection to $\PPwts$ has finite fibers, the dimension cannot be larger. Let $X_1, \ldots, X_k$ be the irreducible components of $I_t$ and fix $i = 1, \ldots, k$. Consider a point $(H, p_1, \dotsc, p_t)$ in $X_i$ not contained in $X_j$ for $j \neq i$. For a small analytic neighborhood $U \subseteq \PPwts$ of $H$, every $\tilde{H} \in U$ intersects $C$ in (at least) $t$ distinct points, and hence the preimage of $U$ in $I_t$ is a disjoint union of several copies of $U$, one of which contains $(H, p_1, \dotsc, p_t)$. As $X_i \setminus (\bigcup_{j\neq i} X_j)$ is Zariski open in $I_t$, it follows that the dimension of $X_i$ is $n-1$.
	
  The final step of the proof is to show that $I_t$ is irreducible. Denote the homogeneous coordinate ring of $C$ by $R$ and set $C' \coloneqq \Proj(\Rone) \subseteq \PP^{n-1}$, so we have the finite morphism $C \to C'$.  Consider the incidence correspondence of $C'$:
  \[
    I'_t \coloneqq \set{(H, p'_1, \dotsc, p'_t) \with H \in \PPs, p'_1, \dotsc, p'_t \in C' \cap H \text{ distinct}}
  \]
  Note that $\deg C' = \deg C$ as $R$ is virtually standard, and hence $I'_t$ is also not empty. By \cite[p. 197]{HarGenus} (see also \cite[Corollary 6.7.8]{DS}), $I'_t$ is irreducible with $\dim I'_t = n-1$.
	
  The finite morphism $\pi \colon C \to C'$ (cf.~Lemma \ref{lem:gen-lin}) is birational as $R$ is virtually standard (see \Cref{prop:spanning}). To be more precise, there are finitely many points $q_1, \ldots, q_r$ in $C$ resp.~$q'_1,\ldots,q'_s$ in $C'$ such that the restriction of $\pi$ to the open subsets obtained by removing those points is an isomorphism. Let $V \subseteq I_t$ and $V' \subseteq I'_t$ be the open subsets obtained by considering only those hyperplanes in $\PPwt^*$ avoiding $q_1, \ldots, q_r$ and $q'_1, \ldots, q'_s$, respectively. Then $\pi$ induces an isomorphism $\tilde{\pi} \colon V \to V'$. Since $\dim (I_t \setminus V) < n-1$, the (Zariski) open subset $X_i \cap V$ is dense in $X_i$ for all $i$. Moreover, $I'_t$ is irreducible, and hence $\tilde{\pi}(X_i \cap V)$ is dense in $I'_t$. But since $\tilde{\pi}$ is an isomorphism, we conclude that the intersection of two irreducible components contains a (non-empty) open subset, and thus $I_t$ is irreducible.
\end{proof}

The following lemma is well-known for ordinary projective spaces. We provide a proof in the weighted case for completeness.
\begin{lem}
  \label{lem:minor}
  Let $Z \subseteq \PPwt$ be the subspace of points whose first $n$ coordinates are zero. Then, for $t,\ell, k \in \NN$ the set
  \[
    U(t,\ell,k) \coloneqq \set{(p_1, \dotsc, p_t) \in (\PPwt\setminus Z)^t \with h_{\set{p_1, \dotsc, p_t}}(\ell) < k}
  \]
  is closed in $(\PPwt\setminus Z)^t$.
\end{lem}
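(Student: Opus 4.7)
The plan is to present $U(t,\ell,k)$ locally as the zero set of determinantal minors of a matrix of regular functions, and conclude closedness from the fact that rank is preserved under row scaling. First, I would fix a monomial basis $m_1, \dotsc, m_N$ of $S_\ell$, where $N \coloneqq \dim_\kk S_\ell$, and cover $\PPwt \setminus Z$ by the standard affine opens $D_+(X_1), \dotsc, D_+(X_n)$. Then $(\PPwt \setminus Z)^t$ is covered by products $U_{\mathbf{i}} \coloneqq D_+(X_{i_1}) \times \dotsb \times D_+(X_{i_t})$. On each such chart every point $p_s$ in the $s$-th factor admits a unique representative whose $X_{i_s}$-coordinate equals $1$; the remaining coordinates are regular functions on $U_{\mathbf{i}}$. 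This produces a $t \times N$ matrix $M$ with polynomial entries $M_{s,j} \coloneqq m_j(p_s)$.

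The key observation is that $h_{\set{p_1, \dotsc, p_t}}(\ell) = \mathrm{rank}(M)$. When the $p_s$ are pairwise distinct, this expresses $h_\Gamma(\ell)$ as the dimension of the image of the evaluation map $S_\ell \to \kk^{\Gamma}$ (in the spirit of \Cref{ex:intpn-fct}). If $p_s$ and $p_{s'}$ coincide as points of $\PPwt$, their normalized representatives differ by some scalar $\lambda \in \kk^*$, and by degree-$\ell$ homogeneity the corresponding rows of $M$ differ by the factor $\lambda^\ell$; since duplicating or scaling rows does not change the rank, we still have $\mathrm{rank}(M) = h_{\set{p_1,\dotsc,p_t}}(\ell)$. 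Consequently $U(t,\ell,k) \cap U_{\mathbf{i}}$ is cut out inside $U_{\mathbf{i}}$ by the vanishing of all $k \times k$ minors of $M$, which is a Zariski closed condition.

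To conclude that $U(t,\ell,k)$ is closed in $(\PPwt \setminus Z)^t$, I would check compatibility on overlaps of these charts: passing between two admissible choices of normalization for the factor $D_+(X_{i_s})$ versus $D_+(X_{i'_s})$ rescales the $s$-th row of $M$ by a nonzero constant, which preserves the vanishing of every minor. Hence the local determinantal descriptions patch, and closedness follows from the locality of the Zariski topology. The only real bookkeeping is the handling of coincident points and chart transitions, and both obstacles collapse to the single elementary fact that the rank of a matrix is invariant under multiplying rows by nonzero scalars; so I expect no genuine difficulty beyond setting up the notation cleanly.
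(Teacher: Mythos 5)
Your proof is correct and rests on the same determinantal idea as the paper's: express $h_{\{p_1,\dots,p_t\}}(\ell)$ as the rank of a matrix of regular functions on a suitable open set, and observe that the rank-$<k$ locus is cut out by the vanishing of $k\times k$ minors. The one genuine difference is the choice of open cover and of denominator. The paper covers $(\PPwt\setminus Z)^t$ by sets $U_H$ consisting of tuples all avoiding a fixed hyperplane $H = V(\theta)$, and uses $f/\theta^\ell$ as the regular function; since a single $\theta$ serves as denominator for all $t$ points, coincident $p_j = p_{j'}$ automatically produce identical columns, and no scaling discussion is needed. You instead cover by products of coordinate charts $D_+(X_{i_1}) \times \dotsb \times D_+(X_{i_t})$ with a separate normalizing coordinate $X_{i_s}$ in each factor (equivalently, dividing by $X_{i_s}^\ell$ for the $s$-th point), which makes the cover finite and explicit but requires your extra observation that duplicated points normalized against different coordinates give rows that are proportional by $\lambda^\ell$ rather than equal. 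Both resolutions are valid; yours trades an infinite-but-uniform cover for a finite one at the price of the row-scaling remarks. One small redundancy in your write-up: the "compatibility on overlaps" check at the end is not needed. Once you know $U(t,\ell,k)\cap U_{\mathbf i}$ is closed in $U_{\mathbf i}$ for every multi-index $\mathbf i$, closedness of $U(t,\ell,k)$ follows directly from locality of the Zariski topology, because each local determinantal locus is by construction the trace of the same globally defined subset $\{\,h_{\{p_1,\dots,p_t\}}(\ell)<k\,\}$; there is nothing to patch.
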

\begin{proof}
  For each tuple $(p_1, \dotsc, p_t) \in (\PPwt\setminus Z)^t$, there exists a (generic) hyperplane $H \subseteq \PPwt$ which contains none of the $p_i$ (note that $Z$ is exactly the set of all points contained in every hyperplane). Hence, we can cover $(\PPwt\setminus Z)^t$ by open sets of the form
  \[
    U_H \coloneqq \set{(p_1, \dotsc, p_t) \in (\PPwt\setminus Z)^t \with p_1, \dotsc, p_t \notin H } \text{.}
  \]
  It is sufficient to show that $U(t,\ell,k) \cap U_H$ is closed in $U_H$ for all $H$. So let us fix a hyperplane $H$.
	
  Recall that the homogeneous coordinate ring of $\PPwt$ is denoted by $S$. Let $\theta \in S_1$ be the linear form corresponding to $H$, so the functions of degree $\ell$ on $\set{p_1, \dotsc, p_t}$ arise as restrictions of rational functions $f / \theta^\ell$ for $f \in S_\ell$ to $\set{p_1, \dotsc, p_t}$.   Let $f_1, \dotsc, f_r \in S_\ell$ be a $\kk$-basis of $S_\ell$. Now,  $h_{\set{p_1, \dotsc, p_t}}(\ell)$ can be computed as the rank of a matrix $M$ where the $(i,j)$-th entry of $M$ is the evaluation of $f_i / \theta^\ell$ at $p_j$. Therefore, the locus where $h_{\set{p_1, \dotsc, p_t}}(\ell) < k$ is the locus where the rank of $M$ is less than $k$, and thus closed.
\end{proof}

Now, we are ready to prove the Uniform Position Principle.
\begin{proof}[Proof of \Cref{prop:upp}]
  Let $H \in \PPwts$ be a generic hyperplane. As $C \cap H$ is finite, its homogeneous coordinate ring is $1$-dimensional. By \cite[Theorem 4.1.3]{BH}, there exists a degree $\ell_0 \in \NN$ such that for all $\ell \geq \ell_0$, it holds that $h_{H\cap C}(\ell) = \#(C \cap H)= \deg C$. Note that $h_{C \cap H}(\ell) = h_C(\ell) - h_C(\ell - 1)$, and therefore $\ell_0$ does not depend on $H$.
  
  For any subset $\Gamma \subseteq C \cap H$ and $\ell \geq \ell_0$, it follows from part (3) of \Cref{lem:points} that $h_\Gamma(\ell) = \#\Gamma$, so in this range the Hilbert function depends only on the cardinality of $\Gamma$.

  To finish the proof, we need to show that this is also true for $\ell < \ell_0$. Fix two integers $1 \leq \ell \leq \ell_0$ and $1 \leq t \leq \deg C$ and consider the incidence correspondence $I_t$ from Lemma \ref{lem:inccor}. Let $J_{t,\ell, k}$ be the set of points $(H, p_1, \dotsc, p_t) \in I_t$ where $h_{\set{p_1, \dotsc, p_t}}(\ell) < k$. As $R$ is semistandard, we have that $C \subseteq \PPwt\setminus Z$, and thus the image of the projection $I_t \to \PPwt^t$ is contained in $(\PPwt\setminus Z)^t$ (here we used the notation of \Cref{lem:minor}, note that $Z$ is exactly the locus where the projection $\PPwt \dashrightarrow \PP^{n-1}$ is not defined). As $J_{t,\ell, k}$ coincides with the preimage of $U(t,\ell, k)$ under that map, it is closed. Note that $J_{t,\ell,t+1} = I_t$ and $J_{t,\ell,0} = \emptyset$ and let $0 \le k_0 = k(t, \ell) \le t+1$ be the maximal number such that $J_{t,\ell,k_0}$ is a proper subset of $I_t$. Since $I_t$ is irreducible by \Cref{lem:inccor}, we have $\dim J_{t,\ell,k_0} < \dim I_t = n-1 = \dim \PPwts$. Hence, the image of $J_{t,\ell,k_0}$ under the projection $I_t \to \PPwts$ misses a dense open subset $U_{t, \ell}$. Note that, by construction, for each $H \in U_{t, \ell}$ and any $\Gamma \subseteq C \cap H$ of size $t$, it holds that $h_{\Gamma}(\ell) = k_0$. The nonempty set $U := \bigcap_{t=1}^{\deg C}\bigcap_{\ell=1}^{\ell_0} U_{t, \ell}$ is open in $\PPwt^*$ such that  $C \cap H$ lies in uniform position for each $H \in U$.
\end{proof}

\providecommand{\bysame}{\leavevmode\hbox to3em{\hrulefill}\thinspace}
\providecommand{\href}[2]{#2}

\end{document}